\theoremstyle{plain}
\newtheorem{thm}{Theorem}[section]
\newtheorem{cor}[thm]{Corollary}
\newtheorem{prop}[thm]{Proposition}
\theoremstyle{definition}
\newtheorem{definition}[thm]{Definition}
\newtheorem{assumption}[thm]{Assumption}
\newtheorem{example}[thm]{Example}
\theoremstyle{remark}
\newtheorem{remark}[thm]{Remark}
\newcounter{stp}
\newcommand{\E}{\mathcal{E}}
\newcommand{\T}{\mathcal{T}}
\newcommand{\Cub}{C_{ub}(\mathbb{R}_+,X)}
\newcommand{\Cb}{C_{b}(\mathbb{R}_+,X)}
\begin{document}
\title[Robustness of asymptotic properties]{Asymptotic properties of $C_0$-semigroups 
under 
perturbations}
\author{Martin Adler}
\footnote{This work was completed during a research stay at the University of Memphis. I wish to express my gratitute to Prof. Jerome Goldstein for his hospitality. }
\address{Martin Adler\\
Arbeitsbereich Funktionalanalysis \\
Mathematisches Institut\\
Auf der Morgenstelle 10 \\
D-72076 T\"{u}bingen}
\email{maad@fa.uni-tuebingen.de}

\begin{abstract}
For a given $C_0$-semigroup $(T(t))_{t\geq 0}$ we consider Staffans-Weiss perturbations $(B,C)$ of its generator as studied in \cite{ABombEngel14} and investigate the robustness of asymptotic properties of the perturbed $C_0$-semigroup $(T_{BC}(t))_{t\geq 0}$. 
As a concrete application we study the asymptotic behavior of a neutral semigroup. 
\end{abstract}
\keywords{$C_0$-semigroups, perturbation, admissibility, asymptotics, neutral equations}
\subjclass[2010]{47D06, 47A55, 34E10, 34K40}

\maketitle

\section{Introduction}
In 1953, R. Phillips \cite{Phillips53} started the investigation of qualitative properties of $C_0$-semigroups which are preserved under bounded perturbations of their generators and showed that immediate norm continuity is one of them. 
Since then, many such invariant properties (or counterexamples) for bounded and unbounded perturbations have been found, see  \cite{ChowLeiva96,KunstmannWeis01,NagPia1998,Renardy95,Voigt1994,Voigt1980}. 

In this paper we concentrate on asymptotic properties and therefore consider subspaces $\E\subset \Cub$ of functions $f\in \E$ having a certain characteristic asymptotic property. 
We start with a bounded $C_0$-semigroup $(T(t))_{t\geq 0}$ having orbits in $\E$ and call $\E$ a \textit{robust subspace} for some perturbation if the orbits of the perturbed semigroup remain in $\E$.
Moreover, the asymptotic property is said to be \textit{robust} under this perturbation. 

Such robust asymptotic properties have been investigated for \textit{Miyadera-Voigt} (see V. Casarino and S. Piazzera \cite{CasPia2001}) and \textit{Desch-Schappacher perturbations} (see L. Maniar \cite{Maniar2005}). 
We refer to \cite{BouManMou2003,BouManMou06,CasManPia2002,Maniar2004} for further publications treating similar questions.

In this note we extend such results to the class of Staffans-Weiss perturbations. 
This class was introduced by George Weiss \cite[Thms. 6.1, 7.2]{Weiss94} and Olof Staffans \cite[Sects. 7.1, 7.4]{Staffans2005} in the context of regular linear systems. 
We use the operator-theoretic approach given in \cite{ABombEngel14}. 

\section{Staffans-Weiss Perturbation Theory}
We recall the Staffans-Weiss perturbation theorem as stated in \cite[Theorem 3.1]{ABombEngel14}. 
To this aim, let $(A,D(A))$ be the generator of a $C_0$-semigroup $(T(t))_{t\geq 0}$ on the Banach space $X$. 
Consider an additional Banach space $U$ and operators $B\in \mathcal{L}(U,X_{-1}^A)$, $C\in \mathcal{L}(Z,U)$, where $X_{-1}^A$ is the extrapolation space with respect to $A$ (see \cite[III. Section 5]{EN2000}) and $Z$ is a Banach space such that $X_1^A \hookrightarrow Z \hookrightarrow X$. 
Under compatibility and admissibility assumptions on the pair $(B,C)$ it is shown that $A_{BC} := (A_{-1} + BC)_{|_X}$ with domain
\begin{align*}
	D(A_{BC}) := \{x\in Z : A_{-1} x + B C x \in X \}
\end{align*}
generates a $C_0$-semigroup $(T_{BC}(t))_{t\geq 0}$ on $X$ (see \cite[Thm. 3.1]{ABombEngel14}). 
Let us make this more precise. 

\begin{definition}
\label{compatibel}
Under the above assumptions, the triple $(A,B,C)$ 
is called \emph{compatible} if for some $\lambda\in\rho(A)$ we have
\begin{equation*}\label{bild}
	\mathrm{range}\bigl(R(\lambda,A_{-1}) B\bigr)\subset Z.
\end{equation*}
\end{definition}

\begin{definition}
\label{infinitetimeadmissibilty}
Let the triple $(A,B,C)$ be compatible and take $1\leq p < \infty$. 
\begin{enumerate}[(i)]
\item The operator $B\in\mathcal{L}(U,X_{-1}^A)$ is \emph{$p$-admissible} if there exists $t>0$ such that for all $u\in L^p(0,t;U)$
\begin{equation*}
	\int_0^{t} T_{-1}(t-s) B u(s) \,\mathrm{ds} \in X.
\end{equation*}
In this case we define for each $t > 0$ the \textit{control map} $\mathcal{B}_t \in \mathcal{L}(L^p(0,t;U),X)$ corresponding to $A$ and $B$ as in \cite[Rem. 2.2]{ABombEngel14}.

\item The operator $C\in\mathcal{L}(Z,U)$ is \emph{$p$-admissible} if there exists $M_C\geq 0$ and $t>0$ such that 
\begin{equation*}
	\int_0^t \left\| CT(s)x \right\|_U^p \mathrm{ds} \leq M_C \left\| x \right\|_X^p
\end{equation*}
for all $x\in D(A)$. 
In this case we define for each $t>0$ the \textit{observation map} $\mathcal{C}_t\in \mathcal{L}(X,L^p(0,t;U))$ corresponding to $A$ and $C$ as in \cite[Rem. 2.4]{ABombEngel14}.

\item The pair $(B,C)\in\mathcal{L}(U,X_{-1})\times\mathcal{L}(Z,U)$ 
is \emph{$p$-admissible} if there exists $M_{BC}\geq 0$ and $t>0$ such that
\begin{equation*}
	\int_0^t \left\| C \int_0^{r} T_{-1}(r-s) B u(s) \,\mathrm{ds}\right\|_U^p \mathrm{dr} 
		\leq M_{BC} \left\|u \right\|_p^p
\end{equation*}
holds for all $u\in W^{2,p}_0(0,t;U) := \{f\in W^{2,p}(0,t;U) : f(0) = f'(0) = 0\}$. 
In this case we define for each $t>0$ the \textit{input-output map} $\mathcal{F}_t \in \mathcal{L}(L^p(0,t;U))$ corresponding to $A$, $B$ and $C$ as in \cite[Rem. 2.7]{ABombEngel14}.
\end{enumerate}
\end{definition}

\begin{remark}
\label{expStability}
If the $C_0$-semigroup $(T(t))_{t\geq 0}$ is uniformly exponentially stable, it suffices to require admissibility for one $t>0$ only to obtain time independent constants $M_B$, $M_C$ and $M_{BC}$, see \cite[Rem. 2.6]{Weiss89Control}, \cite[Rem. 2.4]{Weiss89Observation}, \cite[Prop. 2.1]{Weiss89Representation}. 
\end{remark}

We now recall the class of perturbations considered in \cite{ABombEngel14}.
\begin{definition}
\label{class}
Let $(A,D(A))$ be the generator of a $C_0$-semigroup $(T(t))_{t\geq 0}$ on a Banach space $X$, $B\in \mathcal{L}(U,X^A_{-1})$ and $C\in \mathcal{L}(Z,U)$ for some Banach space $Z$ satisfying  $X^A_1\hookrightarrow Z\hookrightarrow X$.
We call the pair $(B,C)$ a \emph{Staffans-Weiss perturbation} of $A$ if for some $1\leq p < \infty$ the following conditions are satisfied:
\begin{enumerate}[(i)]
\item The triple $(A,B,C)$ is compatible.
\item The operator $B$ is $p$-admissible.
\item The operator $C$ is $p$-admissible.
\item The pair $(B,C)$ is $p$-admissible.
\item The operator $I-\mathcal{F}_t$ is invertible for some $t>0$.
\end{enumerate}
\end{definition}

For such perturbations the following result holds (see \cite{Staffans2005,Weiss94} or \cite{ABombEngel14}).
\begin{thm}
\label{ALTperturbation}
Let $(A,D(A))$ be the generator of a $C_0$-semigroup $\mathcal{T}=(T(t))_{t\geq 0}$ on a Banach space $X$. 
Assume that $(B,C)$ is a Staffans-Weiss perturbation of $A$. Then
\begin{equation}\label{eq:def-A_BC}
	A_{BC} := (A_{-1}+BC)|_{X}, \quad D(A_{BC}) := \bigl\{x\in Z:A_{-1} x + B C x \in X\bigr\}
\end{equation}
generates a $C_0$-semigroup $\mathcal{T}_{BC}=(T_{BC}(t))_{t\geq 0}$ on the Banach space $X$ satisfying
\begin{equation}
	T_{BC}(t) x = T(t)x + \int\limits_0^t T_{-1}(t-s) BC T_{BC}(s) x \mathrm{ds} \quad\text{for } x\in D(A_{BC}).
\end{equation} 
\end{thm}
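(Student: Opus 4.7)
The plan is to construct $(T_{BC}(t))_{t\ge 0}$ from the data $(A,B,C)$ by solving a closed-loop feedback equation in the spirit of Staffans and Weiss, and then to identify its generator with $A_{BC}$ as defined in \eqref{eq:def-A_BC}. By condition (v) of Definition \ref{class} combined with the admissibility conditions, I can fix $t_0 > 0$ such that $I - \mathcal{F}_t$ is invertible in $\mathcal{L}(L^p(0,t;U))$ for every $t\in (0,t_0]$. For such $t$ and $x\in X$, I would set
\[
	u_x^t := (I - \mathcal{F}_t)^{-1} \mathcal{C}_t x \in L^p(0,t;U), \qquad T_{BC}(t) x := T(t) x + \mathcal{B}_t u_x^t,
\]
thinking of $u_x^t$ as the closed-loop input produced by the feedback law $u = Cy$ in the regular linear system associated with $(A,B,C)$.

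The first task is to verify that this prescription yields a $C_0$-semigroup on $[0,t_0]$. The semigroup identity $T_{BC}(t+s) = T_{BC}(t) T_{BC}(s)$ rests on the cocycle/shift-invariance identities enjoyed by the control, observation and input--output maps: concatenation of inputs decomposes $\mathcal{B}_{t+s}$, and an output on $[0,t+s]$ splits into the output generated by $x$ on $[0,s]$ followed by the one produced by the state $T_{BC}(s) x$ on $[s,t+s]$. These identities imply that $u_{T_{BC}(s)x}^t$ is the shifted tail of $u_x^{t+s}$, from which the semigroup law follows. Strong continuity at $0$ is a consequence of the bounds $\|\mathcal{B}_t\|, \|\mathcal{C}_t\| \to 0$ on suitable dense subspaces as $t\to 0$, together with strong continuity of $\mathcal{T}$; the definition then extends to all $t\ge 0$ by iteration.

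To recover the Volterra equation in the statement, I would unfold $u_x^t = \mathcal{C}_t x + \mathcal{F}_t u_x^t$: spelled out pointwise this reads
\[
	u_x^t(s) = C T(s) x + C \int_0^s T_{-1}(s-r) B u_x^t(r) \,\mathrm{dr} = C T_{BC}(s) x
\]
whenever the right-hand side is well defined, which holds precisely on $D(A_{BC})$ by compatibility. Inserting this back into the definition of $T_{BC}(t)x$ produces the variation-of-parameters formula.

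The main obstacle, as always in this theory, is the identification of the infinitesimal generator of $(T_{BC}(t))_{t\ge 0}$ with $A_{BC}$. The cleanest route is via the resolvent: taking the Laplace transform of $T_{BC}(\cdot)x$ and exploiting the Laplace transforms of $\mathcal{B}_t$, $\mathcal{C}_t$ one derives, for sufficiently large $\mathrm{Re}\,\lambda$,
\[
	R(\lambda,A_{BC}) = R(\lambda,A) + R(\lambda,A_{-1}) B \bigl(I - C R(\lambda,A_{-1}) B\bigr)^{-1} C R(\lambda,A),
\]
where the middle inverse exists thanks to compatibility and admissibility of $(B,C)$. One then checks directly that the range of this operator equals $\{x\in Z : A_{-1} x + BCx \in X\}$ and that it inverts $\lambda - (A_{-1} + BC)|_X$ there. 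The delicate point is precisely the use of compatibility, which ensures that $R(\lambda, A_{-1}) B$ maps $U$ into $Z$ so that $C$ may subsequently be applied; this is where the technical care is greatest, and it is the step on which the whole identification of the domain $D(A_{BC})$ hinges.
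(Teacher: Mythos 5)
The paper gives no proof of Theorem \ref{ALTperturbation}: it is quoted from \cite{ABombEngel14,Staffans2005,Weiss94}, and your outline reconstructs exactly the construction of those sources, namely the closed-loop formula $T_{BC}(t)x = T(t)x + \mathcal{B}_t(I-\mathcal{F}_t)^{-1}\mathcal{C}_t x$ (the paper's own formula \eqref{DefSemigroup}), the concatenation identities yielding the semigroup law, and the resolvent identity identifying the generator with $A_{BC}$. Your approach is therefore essentially the same as the cited proof; the genuinely delicate points you name (propagating invertibility of $I-\mathcal{F}_t$ from one $t$ to all smaller $t$ via causality, and making sense of $u_x^t(s)=CT_{BC}(s)x$ through a suitable extension of $C$ on $D(A_{BC})$) are correctly located but left as sketches, which matches the level of detail the paper itself supplies.
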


\section{Robustness of asymptotic properties}
We now turn to the investigation of robustness of asymptotic properties of $C_0$-semigroups under Staffans-Weiss perturbations. 
To do so, we consider the orbits $\T x := {[}t\mapsto T(t)x{]}$ and $\T_{BC}x$ for all $x\in X$. 
For a bounded $C_0$-semigroup $(T(t))_{t\geq 0}$ the orbits $\T x$ belong to $\Cub$, the space of all bounded, uniformly continuous functions from $\mathbb{R}_+$ to $X$. 
We now look for subspaces $\E\subset \Cub$ appropriate for our purpose.  

\begin{definition}
Let $\E \subset \Cub$ be a closed subspace. 
We call $\E$ an \textit{asymptotic subspace} if for all $t\geq 0$ and $f\in \Cub$
\begin{align}
	S(t) f\in \E \quad \Longrightarrow \quad f\in \E, \label{translationbiinvariant}
\end{align}
where $(S(t))_{t\geq 0}$ denotes the left translation semigroup on $\Cub$. 
\end{definition}

\begin{remark}
The authors of \cite{BatChill99,CasPia2001} call subspaces satisfying \eqref{translationbiinvariant} \textit{translation-(bi)invariant}.
\end{remark}

We present a list of bounded $C_0$-semigroups whose orbits lead to asymptotic subspaces, see \cite[Sect. 7]{BatChill99} and \cite{CasPia2001}. 
\begin{enumerate}[(i)]
\item $(T(t))_{t\geq 0}$ is \textit{bounded}, 
i.e., there exists a constant $M\geq 1$ such that $\left\| T(t) \right\| \leq M$ for all $t\geq 0$. 
\item $(T(t))_{t\geq 0}$ is \textit{compact}, 
i.e., for all $x\in X$ the orbits $\T x$ are relatively compact in $X$.
\item $(T(t))_{t\geq 0}$ is \textit{weakly compact}, 
i.e., for all $x\in X$ the orbits $\T x$ are relatively weakly compact in $X$.
\item $(T(t))_{t\geq 0}$ is \textit{uniformly compact}, 
i.e., $\{T(\cdot +t) x: t\in \mathbb{R}_+ \} \subset C_b(\mathbb{R}_+,X)$ is relatively compact for all $x\in X$, 
where $T(\cdot +t) x := {[} s\mapsto T(s+t)x {]}\in \Cub$. 
\item $(T(t))_{t\geq 0}$ is \textit{uniformly weakly compact},
i.e., $\{T(\cdot +t) x: t\in \mathbb{R}_+ \}$ weakly relatively compact in $C_b(\mathbb{R}_+,X)$ for all $x\in X$.
\item $(T(t))_{t\geq 0}$ is \textit{strongly stable}, 
i.e., $\left\| T(t)x \right\| \rightarrow 0$ as $t\rightarrow \infty$ for all $x\in X$. 
\item $(T(t))_{t\geq 0}$ is \textit{weakly stable}, 
i.e., $\left| \langle \phi, T(t)x \rangle \right| \rightarrow 0$ as $t\rightarrow \infty$ for all $x\in X$ and $\phi \in X'$. 
\item $(T(t))_{t\geq 0}$ is \textit{mean ergodic}, i.e., for every $x\in X$ the Cesaro limit
\begin{align*}
	\lim\limits_{t\rightarrow \infty} \frac{1}{t} \int_0^t T(s)x \,\mathrm{ds}
\end{align*}
exists in $X$. 
\item $(T(t))_{t\geq 0}$ is \textit{uniformly ergodic}, 
i.e., for every $x\in X$ the Cesaro limit
\begin{align*}
	\lim_{t\rightarrow \infty} \frac{1}{t} \int_0^t T(\cdot+s) x \,\mathrm{ds}
\end{align*}
exists in $\Cub$. 
\end{enumerate}

We now express our problem in abstract form. 
Let $\E\subset \Cub$ be an asymptotic subspace and let $\mathcal{T}=(T(t))_{t\geq 0}$ be a bounded $C_0$-semigroup such that all orbits $\T x$ belong to $\E$.  
Under what conditions on the Staffans-Weiss perturbation $(B,C)$ do the orbits of the perturbed $C_0$-semigroup $\T_{BC} = (T_{BC}(t))_{t\geq 0}$ remain in $\E$? 
If this is the case, then we call $\E$ a \textit{robust subspace for $(B,C)$}. \\

In order to find appropriate perturbing operators, we strengthen the requirements on the Staffans-Weiss perturbation $(B,C)$. 
\begin{definition}
\label{SWperturbation}
Let $(A,D(A))$ be the generator of a $C_0$-semigroup $(T(t))_{t\geq 0}$ on a Banach space $X$. 
We call a Staffans-Weiss perturbation $(B,C)$ ($p$ as in Definition \ref{class}) an \emph{infinite-time Staffans-Weiss perturbation} of $A$ if 
\begin{enumerate}[(i)]
\item $B$ is \emph{infinite-time $p$-admissible}, that is, there exists $M_B\geq 0$ such that for all $t>0$ and $u\in L^p(0,t;U)$ we have
\begin{equation*}
	\int_0^{t} T_{-1}(t-s) B u(s) \,\mathrm{ds} \in X
\end{equation*}
and 
\begin{equation*}
	\left\| \int_0^{t} T_{-1}(t-s) B u(s) \,\mathrm{ds} \right\|_X \leq M_B \left\| u\right\|_{L^p(0,t;U)}, 
\end{equation*} \label{inftimeAdm}
\item $\sup_{t>0} \left\| (I-\mathcal{F}_t)^{-1} \mathcal{C}_t \right\|_{\mathcal{L}(X,L^p(0,t;U))} < \infty$. \label{invertibility}
\end{enumerate}
\end{definition}

Under these additional assumptions, our perturbation result reads as follows. 
\begin{prop}
\label{UniformBoundedness}
Let $(A,D(A))$ be the generator of a bounded $C_0$-semigroup $\mathcal{T}=(T(t))_{t\geq 0}$ and assume that $(B,C)$ is an infinite-time Staffans-Weiss perturbation of $A$. 
Then $(A_{BC}, D(A_{BC}))$ generates a bounded $C_0$-semigroup $\mathcal{T}_{BC}=(T_{BC}(t))_{t\geq 0}$ on the Banach space $X$.
\end{prop}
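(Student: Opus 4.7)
The plan is to exploit the closed-form representation of $T_{BC}(t)$ in terms of the control map $\mathcal{B}_t$, the observation map $\mathcal{C}_t$ and the input--output map $\mathcal{F}_t$ of the unperturbed system, and then bound each factor uniformly in $t$.

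\textbf{Step 1: Existence of the semigroup.} Since $(B,C)$ is in particular a Staffans--Weiss perturbation, Theorem~\ref{ALTperturbation} already yields that $(A_{BC},D(A_{BC}))$ generates a $C_0$-semigroup $\mathcal{T}_{BC}$ on $X$. Only its boundedness remains to be proved.

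\textbf{Step 2: Closed-form representation.} I would next recall (or derive from the variation-of-constants formula in Theorem~\ref{ALTperturbation}) the Staffans--Weiss identity
\begin{equation*}
    T_{BC}(t) \;=\; T(t) + \mathcal{B}_t\,(I-\mathcal{F}_t)^{-1}\mathcal{C}_t, \qquad t>0,
\end{equation*}
valid as operators on $X$. On $D(A_{BC})$ this is seen by writing $\int_0^t T_{-1}(t-s)BC T_{BC}(s)x\,\mathrm{ds} = \mathcal{B}_t\bigl(C T_{BC}(\cdot)x\bigr)$ and observing that $y := CT_{BC}(\cdot)x$ satisfies $y = \mathcal{C}_t x + \mathcal{F}_t y$, so $y = (I-\mathcal{F}_t)^{-1}\mathcal{C}_t x$; the identity then extends to all of $X$ by density and the continuity of the involved operators. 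This is precisely the representation used in \cite{ABombEngel14}, so I would cite it rather than redo the calculation.

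\textbf{Step 3: Uniform estimates.} Now each of the three factors is uniformly bounded in $t$:
\begin{enumerate}[(i)]
\item $\|T(t)\|\leq M$ for all $t\geq 0$, by boundedness of $\mathcal{T}$.
\item $\|\mathcal{B}_t\|_{\mathcal{L}(L^p(0,t;U),X)}\leq M_B$ for all $t>0$, by the infinite-time $p$-admissibility of $B$ (Definition~\ref{SWperturbation}(i)).
\item $\bigl\|(I-\mathcal{F}_t)^{-1}\mathcal{C}_t\bigr\|_{\mathcal{L}(X,L^p(0,t;U))} \leq K$ for all $t>0$, by assumption~(ii) of Definition~\ref{SWperturbation}.
\end{enumerate}
Combining these gives $\|T_{BC}(t)\|\leq M + M_B K$ for all $t>0$, which together with the strong continuity at $0$ yields the desired boundedness of $\mathcal{T}_{BC}$.

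\textbf{Main obstacle.} The only subtle point is the justification of the representation in Step~2, in particular that $y = CT_{BC}(\cdot)x$ really lies in $L^p(0,t;U)$ and solves the fixed-point equation $y = \mathcal{C}_t x + \mathcal{F}_t y$. For $x\in D(A_{BC})$ the regularity is clear and $C$ can be applied pointwise; the extension to arbitrary $x\in X$ requires density plus the admissibility bounds to control $\mathcal{C}_t$ and $\mathcal{F}_t$ on the appropriate completions. Once this representation is in hand, the boundedness of $\mathcal{T}_{BC}$ is an immediate consequence of the three uniform estimates above.
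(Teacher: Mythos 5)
Your proposal is correct and follows essentially the same route as the paper: it invokes the representation $T_{BC}(t)=T(t)+\mathcal{B}_t(I-\mathcal{F}_t)^{-1}\mathcal{C}_t$ from \cite{ABombEngel14} and then combines the boundedness of $\mathcal{T}$, the infinite-time admissibility bound $M_B$ for $\mathcal{B}_t$, and the uniform bound on $(I-\mathcal{F}_t)^{-1}\mathcal{C}_t$ to get a $t$-independent estimate. The paper simply cites the representation without rederiving it, so your Step~2 sketch is extra but harmless.
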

\begin{proof}
By Theorem 3.1 in \cite{ABombEngel14}, the perturbed semigroup $(T_{BC}(t))_{t\geq 0}$ is given by
\begin{align}
	T_{BC}(t)x = T(t)x + \mathcal{B}_t (I-\mathcal{F}_t)^{-1} \mathcal{C}_t x, \quad x\in X. \label{DefSemigroup}
\end{align}
The boundedness of $(T_{BC}(t))_{t\geq 0}$ follows since $(T(t))_{t\geq 0}$ is bounded and
\begin{align*}
	\sup_{t>0} \left\| \mathcal{B}_t (I-\mathcal{F}_t)^{-1} \mathcal{C}_t x \right\|_X 
		\leq M_B \sup_{t>0} \left\| (I-\mathcal{F}_t)^{-1} \mathcal{C}_t x \right\|_p < \infty
\end{align*}
by Definition \ref{SWperturbation} $\eqref{inftimeAdm}$ and $\eqref{invertibility}$.
\end{proof}

\begin{remark}
\begin{enumerate}[(i)]
\item Let $(A,D(A))$ be the generator of a bounded $C_0$-semigroup $(T(t))_{t\geq 0}$ on $X$. 
Proposition \ref{UniformBoundedness} states that $\E=\Cub$ is robust for all infinite-time Staffans-Weiss perturbations $(B,C)$ of $A$. 
\item In Proposition \ref{UniformBoundedness} we obtain the generator property of $A_{BC}$ without a rescaling of the original semigroup $(T(t))_{t\geq 0}$. 
Thus, it allows us to investigate the robustness of asymptotic properties under Staffans-Weiss perturbations. 
\end{enumerate}
\end{remark}

\begin{remark}
Let the $C_0$-semigroup $(T(t))_{t\geq 0}$ be uniformly exponentially stable with a compatible triple $(A,B,C)$ such that $B$, $C$ and $(B,C)$ are $p$-admissible with $1\in \rho(\mathcal{F}_t)$ for some $t>0$ and $1\leq p < \infty$. 
By Remark \ref{expStability} and \cite[Lemma 3.3]{ABombEngel14} the condition
\begin{align}
	\left\| T(t) + \mathcal{B}_t (I-\mathcal{F}_t)^{-1} \mathcal{C}_t \right\| < 1 \quad \text{for some } t>0 \label{perturbedSGexpStable}
\end{align}
implies that $1\in \rho(\mathcal{F}_{\infty})$ and thus $(B,C)$ is an infinite-time Staffans-Weiss perturbation of $A$. 
Then the perturbed $C_0$-semigroup $(T_{BC}(t))_{t\geq 0}$ remains uniformly exponentially stable by \eqref{DefSemigroup} and \cite[V. Prop. 1.7]{EN2000}. 
\end{remark}

The following theorem is our main result.

\begin{thm}
\label{ThmAsymptotik}
Let $(T(t))_{t\geq 0}$ be a bounded $C_0$-semigroup on $X$ with generator $(A,D(A))$ and let $\E$ be an asymptotic subspace such that $\T x \in \E$ for all $x\in X$. 
If $(B,C)$ is an infinite-time Staffans-Weiss perturbation of $A$, then $\E$ is a robust subspace for $(B,C)$. 
\end{thm}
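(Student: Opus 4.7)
The plan is to exploit the representation
\begin{equation*}
T_{BC}(t)x = T(t)x + \mathcal{B}_t(I-\mathcal{F}_t)^{-1}\mathcal{C}_t x
\end{equation*}
from the proof of Proposition~\ref{UniformBoundedness}. Since $\T x \in \E$ and $\E$ is a closed linear subspace of $\Cub$, it suffices to show that the difference orbit $g:=\T_{BC}x - \T x$, which lies in $\Cub$ by Proposition~\ref{UniformBoundedness} together with strong continuity, also belongs to $\E$.

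The first step is to globalize the control. Using that the input-output maps are consistent under restriction to smaller intervals (a standard property of well-posed linear systems, visible in our setting from the variation-of-parameters identity in Theorem~\ref{ALTperturbation}), one produces a single $u\in L^p(\mathbb{R}_+;U)$ whose restriction to $[0,t]$ equals $(I-\mathcal{F}_t)^{-1}\mathcal{C}_t x$ for every $t>0$; by Definition~\ref{SWperturbation}\eqref{invertibility} and monotone convergence, $\|u\|_{L^p(\mathbb{R}_+;U)}$ is finite. With this choice, $g(t) = \int_0^t T_{-1}(t-r) B u(r)\,\mathrm{dr}$, which defines an element of $X$ by $p$-admissibility of $B$.

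Next I would approximate by truncations. For $N\in\mathbb{N}$ set
\begin{equation*}
g_N(t) := \int_0^t T_{-1}(t-r) B u(r) \mathbf{1}_{[0,N]}(r)\,\mathrm{dr},
\end{equation*}
so that $g_N \in \Cub$ and $g_N=g$ on $[0,N]$. The key observation is that, for every $t\geq 0$,
\begin{equation*}
(S(N)g_N)(t) = g_N(t+N) = \int_0^N T_{-1}(t+N-r) B u(r)\,\mathrm{dr} = T(t) z_N,
\end{equation*}
where $z_N := \int_0^N T_{-1}(N-r) B u(r)\,\mathrm{dr}\in X$ by $p$-admissibility of $B$. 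Hence $S(N) g_N = \T z_N \in \E$ by hypothesis, and the translation-bi-invariance \eqref{translationbiinvariant} forces $g_N \in \E$. Finally, for $t>N$ a change of variables and the infinite-time admissibility of $B$ give
\begin{equation*}
\|g(t) - g_N(t)\| = \Bigl\|\int_N^t T_{-1}(t-r) B u(r)\,\mathrm{dr}\Bigr\| \leq M_B \, \|u|_{[N,\infty)}\|_{L^p},
\end{equation*}
which tends to $0$ as $N\to\infty$ because $u\in L^p(\mathbb{R}_+;U)$; since $g-g_N$ vanishes on $[0,N]$, this shows $g_N\to g$ uniformly on $\mathbb{R}_+$, and closedness of $\E$ yields $g\in\E$ as desired.

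The main obstacle I expect is the globalization step: establishing that the family $((I-\mathcal{F}_t)^{-1}\mathcal{C}_t x)_{t>0}$ really does define a single $u$ on $\mathbb{R}_+$. A clean route is to first identify $u|_{[0,t]}$ with $CT_{BC}(\cdot)x$ on $[0,t]$ for $x$ in a dense core of $A_{BC}$ via the variation-of-parameters equation, and then extend to arbitrary $x\in X$ using the uniform bound in Definition~\ref{SWperturbation}\eqref{invertibility} and a continuity/density argument. Everything else amounts to routine manipulations with the control and observation maps together with the translation-bi-invariance of $\E$.
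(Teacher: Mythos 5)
Your proposal is correct and follows essentially the same route as the paper: reduce via $T_{BC}(t)x = T(t)x + \mathcal{B}_t(I-\mathcal{F}_t)^{-1}\mathcal{C}_t x$ to showing that $t\mapsto \mathcal{B}_t u$ lies in $\E$ for $u\in L^p(\mathbb{R}_+,U)$, exploit the shift identity that turns the convolution, translated past the support of the input, into an orbit $\T z$ of the unperturbed semigroup, and conclude by translation-bi-invariance, infinite-time admissibility of $B$, and closedness of $\E$. The only (cosmetic) difference is that you approximate by the truncations $u\mathbf{1}_{[0,N]}$ while the paper approximates by step functions $\mathds{1}_{(a,b)}\otimes u$ using their density in $L^p$; the globalization of $(I-\mathcal{F}_t)^{-1}\mathcal{C}_t x$ to a single $u\in L^p(\mathbb{R}_+,U)$, which you rightly flag as needing the consistency of the maps under restriction, is asserted without comment in the paper.
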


In the proof we shall use techniques proposed in \cite{BatChill99} and continued in \cite{CasPia2001,Maniar2005}. 

\begin{proof}
We notice that $(I-\mathcal{F}_t)^{-1} \mathcal{C}_t x \in L^p(0,t;U)$ for all $x\in X$, $t>0$, and $p$ as in Definition \ref{class}. 
Further, ${[}t\mapsto (I-\mathcal{F}_t)^{-1} \mathcal{C}_t x (t){]} \in L^p(\mathbb{R}_+,U)$ by Definition \ref{SWperturbation} \eqref{invertibility}. 

By Formula \eqref{DefSemigroup} it suffices to show that ${[}t\mapsto \mathcal{B}_t u{]}\in \E$ for all $u\in L^p(\mathbb{R}_+,U)$. 
From the assumption on $B$ we obtain that 
\begin{align*}
	\mathbb{B}: L^p(\mathbb{R}_+,U) &\rightarrow \Cb, \\
	u &\mapsto \mathbb{B} u
\end{align*}
is a bounded operator, where $(\mathbb{B} u)(t) := \mathcal{B}_t u$. 
In fact, the strong continuity of $(\mathcal{B}_t)_{t\geq 0}$ (see \cite[Lemma 3.2]{ABombEngel14}) implies the continuity of $\mathbb{B} u$. 
The boundedness of $\mathbb{B} u$ follows from Definition \ref{SWperturbation} \eqref{inftimeAdm}, i.e.,
\begin{align*}
	\left\| \mathbb{B} u \right\|_{C_b(\mathbb{R}_+,X)} 
		= \sup_{t>0} \left\| \mathcal{B}_t u \right\| 
		\leq M_B \left\| u \right\|_p .
\end{align*}
We show that $f := \mathbb{B} \tilde{u} \in \E$ for all $\tilde{u}= \mathds{1}_{|_{(a,b)}} \otimes u$, $u\in U$, and $0\leq a < b$. 
The left translation semigroup on $\Cub$ is denoted by $(S(t))_{t\geq 0}$. 
For $t > 0$ we have the identity
\begin{align*}
	S(b) f (t) = \mathcal{B}_{t+b} \tilde{u}
		&= \int_{0}^{t+b} T_{-1}(t+b-s) B \tilde{u}(s) \;\mathrm{ds} \\
		&= \int_{a}^{b} T_{-1}(t+b-s) B u \;\mathrm{ds} \\
		&= T(t) \int_{0}^{b-a} T_{-1}(b-a-s) B u \;\mathrm{ds} \\
		&= T(t) \mathcal{B}_{b-a} \left( \mathds{1}_{|_{(0,b-a)}} \otimes u\right).
\end{align*}
Using the admissibility of $B$ we have $\mathcal{B}_{b-a} \left( \mathds{1}_{|_{(0,b-a)}} \otimes u\right)\in X$. 
Since $\T x \in \E$ for all $x\in X$, we obtain $S(b) f \in \E$. 
Thus, $\mathbb{B} \tilde{u} \in \E$ since $\E$ is an asymptotic subspace. 

Finally, we obtain $\mathbb{B} u \in \E$ for all $u\in L^p(\mathbb{R}_+,U)$ since the step functions are dense in $L^p(\mathbb{R}_+,X)$ and $\E$ is closed. 
\end{proof}

\begin{remark}
\begin{enumerate}
\item In contrast to the results by Casarino, Piazzera \cite{CasPia2001} and Maniar \cite{Maniar2005} we do not assume the subspace $\E$ to be operator invariant, i.e., 
\begin{align*}
	f\in \E \quad \Rightarrow \quad {[}t\mapsto \mathcal{M} f(t){]} \in \E \quad \forall\, \mathcal{M}\in \mathcal{L}(X).
\end{align*}
\item If one only wants that an individual orbit $\T_{BC} x$ belongs to $\E$ for some $x\in X$, it suffices to assume $\T x \in \E$ and $\mathcal{B} u_x \in \E$, where $u_x \in L^p(\mathbb{R}_+,U)$ is given by
\begin{align*}
	u_x|_{{[}0,T{]}} := (I-\mathcal{F}_T)^{-1} \mathcal{C}_T x \qquad \forall\; T>0. \\
\end{align*}
\end{enumerate}
\end{remark}

In the next corollary we show that Theorem \ref{ThmAsymptotik} generalizes Theorem 3.5 from \cite{CasPia2001} on Miyadera-Voigt perturbations. 
\begin{cor}
\label{CorCASPIA}
Let $(T(t))_{t\geq 0}$ be a bounded $C_0$-semigroup with generator $(A,D(A))$ and uniform bound $M$. 
Take $C\in \mathcal{L}(X_1,X)$ satisfying 
\begin{align}
	\int_{0}^{t} \left\| C T(s) x \right\| \,\mathrm{ds} \leq q \left\| x \right\| \label{MVBedingung}
\end{align}
for all $x\in D(A)$, some $0\leq q < 1$, and all $t>0$. 
If $\E$ is an asymptotic subspace with $\T x \in \E$ for all $x\in X$, then $\E$ is a robust subspace for $(\mathrm{Id},C)$. 
\end{cor}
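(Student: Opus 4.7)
The plan is to recognize the corollary as the case $U=X$, $Z=X_1^A$, $B=\mathrm{Id}$, $p=1$ of Theorem \ref{ThmAsymptotik}, and then to check that $(\mathrm{Id},C)$ satisfies all conditions of Definition \ref{SWperturbation}. Everything else reduces to standard Miyadera--Voigt estimates combined with a Neumann-series argument; hypothesis \eqref{MVBedingung} is strong enough to give the uniform bounds required for \emph{infinite-time} admissibility.

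First I would verify the prerequisites from Definition \ref{class}. Compatibility of $(A,\mathrm{Id},C)$ holds because $R(\lambda,A_{-1})\mathrm{Id}=R(\lambda,A)$ maps $X$ into $X_1^A=Z$. Infinite-time $1$-admissibility of $B=\mathrm{Id}$ (Definition \ref{SWperturbation}\eqref{inftimeAdm}) is immediate from the uniform bound $M$ of $\mathcal{T}$: for $u\in L^1(0,t;X)$,
\begin{align*}
\Bigl\|\int_0^t T(t-s)u(s)\,\mathrm{ds}\Bigr\|_X \leq M\,\|u\|_{L^1(0,t;X)},
\end{align*}
so $M_B=M$ works independently of $t$. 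The $1$-admissibility of $C$ with $M_C=q$ is exactly \eqref{MVBedingung}.

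The main technical point is to control $\mathcal{F}_t$. For $u\in W^{2,1}_0(0,t;X)$, writing $\mathcal{F}_t u(r)=C\int_0^r T(r-s)u(s)\,\mathrm{ds}$ (here $\int_0^r T_{-1}(r-s)u(s)\,\mathrm{ds}\in D(A)$ by the extra regularity of $u$, so $C$ applies), Fubini and a substitution give
\begin{align*}
\int_0^t \|\mathcal{F}_t u(r)\|\,\mathrm{dr}
&\leq \int_0^t\!\!\int_0^r \|CT(r-s)u(s)\|\,\mathrm{ds}\,\mathrm{dr} \\
&= \int_0^t\!\!\int_0^{t-s}\|CT(\tau)u(s)\|\,\mathrm{d}\tau\,\mathrm{ds}
\leq q\,\|u\|_{L^1(0,t;X)},
\end{align*}
by \eqref{MVBedingung} applied to $u(s)\in X$ (using a density/closure argument on the set of test functions as in \cite{ABombEngel14}). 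Hence $\|\mathcal{F}_t\|\leq q<1$ uniformly in $t>0$, so $I-\mathcal{F}_t$ is invertible with $\|(I-\mathcal{F}_t)^{-1}\|\leq (1-q)^{-1}$ for every $t>0$ by Neumann series. This establishes condition (v) of Definition \ref{class} and the pair-admissibility condition (iv) with $M_{BC}\leq q$.

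Finally I would combine these with the $C$-estimate to obtain Definition \ref{SWperturbation}\eqref{invertibility}:
\begin{align*}
\sup_{t>0}\bigl\|(I-\mathcal{F}_t)^{-1}\mathcal{C}_t x\bigr\|_{L^1(0,t;X)}
\leq \frac{1}{1-q}\sup_{t>0}\|\mathcal{C}_t x\|_{L^1(0,t;X)}
\leq \frac{q}{1-q}\,\|x\|,
\end{align*}
so $(\mathrm{Id},C)$ is an infinite-time Staffans--Weiss perturbation of $A$. Theorem \ref{ThmAsymptotik} now yields that $\mathcal{E}$ is robust for $(\mathrm{Id},C)$. The only delicate point is the uniform-in-$t$ bound on $\mathcal{F}_t$, but this is forced by the contraction condition $q<1$ together with the Fubini computation; no rescaling of $\mathcal{T}$ is needed.
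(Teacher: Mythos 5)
Your proposal is correct and follows essentially the same route as the paper: verify infinite-time $1$-admissibility of $\mathrm{Id}$ from the uniform bound $M$, read off $1$-admissibility of $C$ from \eqref{MVBedingung}, use Fubini plus a density argument to get $\|\mathcal{F}_t\|\leq q$ uniformly in $t$, invert by Neumann series, and conclude via Theorem \ref{ThmAsymptotik}. The only cosmetic difference is that the paper makes the density step explicit by working with step functions valued in $D(A)$ and the resolvent identity $C\int_0^t T(t-r)u(r)\,\mathrm{dr}=CR(\lambda,A)\int_0^t T(t-r)(\lambda-A)u(r)\,\mathrm{dr}$, which you only sketch.
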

\begin{proof}
We show that the assumptions of Theorem \ref{ThmAsymptotik} are satisfied. 
For arbitrary $t_0>0$ and $f\in L^1(0,t_0;X)$ we obtain
\begin{align*}
	\left\|\int_{0}^{t_0} T(t_0-s)f(s) \,\mathrm{ds}\right\| \leq M \int_{0}^{t_0} \left\|f(s)\right\| \,\mathrm{ds} 
		= M \left\| f \right\|_1. 
\end{align*}
Hence $\mathrm{Id}$ is infinite-time $1$-admissible, while condition \eqref{MVBedingung} implies the 
$1$-admissibility of $C$. 
We obtain 
\begin{align}
	\int_{0}^{t_0} \left\| C \int_{0}^{t} T(t-r) u(r) \,\mathrm{dr} \right\| \,\mathrm{dt} \leq q \left\| u \right\|_1 \label{BC-MV}
\end{align}
for all $t_0>0$ and $u\in L^1(0,t_0,U)$. 
In fact, take $\lambda \in \rho(A)$. 
For $u = \mathds{1}_{|_{{[}a,b{]}}} \otimes x$, $x\in D(A)$, $0\leq a < b \leq T$, we have
\begin{align*}
	\int_0^{t_0} \left\| C \int_{0}^{t} T(t-r) u(r) \,\mathrm{dr} \right\| \,\mathrm{dt}
		&= \int_0^{t_0} \left\| C R(\lambda,A) \int_0^t T(t-r) (\lambda-A) u(r) \mathrm{dr} \right\| \mathrm{dt} \\
		&= \int_0^{t_0} \left\| \int_0^t C T(t-r) u(r) \mathrm{dr} \right\| \mathrm{dt} \\
		&\leq \int_0^{t_0} \int_{0}^{t} \mathds{1}_{|_{{[}a,b{]}}}(r) \left\| C T(t-r) x \right\| \,\mathrm{dr} \,\mathrm{dt} \\
		&= \int_0^{t_0} \mathds{1}_{|_{{[}a,b{]}}}(r) \int_{0}^{t_0-r} \left\| C T(t) x \right\| \,\mathrm{dt} \,\mathrm{dr} \\
		&\leq q \left\| x \right\| (b-a) = q \left\| u \right\|_1 .
\end{align*}
The above estimate holds for step functions having values in $D(A)$ by linearity and we obtain \eqref{BC-MV} by the density of such functions in $L^1(0,T,X)$ for all $T>0$. 
Hence, $(\mathrm{Id},C)$ is $1$-admissible and $1\in \rho(\mathcal{F}_{t_0})$ for all $t_0>0$ with $\sup_{t_0>0} \left\| (I-\mathcal{F}_{t_0})^{-1} \right\| \leq (1-q)^{-1}$. 
For $x\in D(A)$ we have
\begin{align*}
	\sup_{t_0>0} \left\| (I-\mathcal{F}_{t_0})^{-1} \mathcal{C}_{t_0} x \right\| 
		\leq (1-q)^{-1} \sup_{t_0>0} \int_0^{t_0} \left\| C T(s) x \right\| \mathrm{ds} 
		\leq \frac{q}{1-q} \left\| x \right\|,
\end{align*}
and $(\mathrm{Id},C)$ is an infinite-time Staffans-Weiss perturbation of $A$. 
\end{proof}

Before we consider Desch-Schappacher perturbations in Proposition \ref{ExamplesBOUNDED} and show that Theorem \ref{ThmAsymptotik} relates to Section 3 in \cite{Maniar2005}, we first recall the definition of the Favard class of a $C_0$-semigroup $(T(t))_{t\geq 0}$ on $X$ given by
\begin{align*}
	F_1 := \left\{ x\in X : \sup_{t > 0} \left\| \frac{1}{t} (T(t) x - x) \right\| < \infty \right\} \subset X
\end{align*}
equipped with the norm $\left\| x \right\|_{F_1} := \sup_{t > 0} \left\| \frac{1}{t} (T(t) x - x) \right\|$, see \cite[II. Def. 5.10]{EN2000}.

We denote the Favard class associated to the extrapolated $C_0$-semigroup $(T_{-1}(t))_{t\geq 0}$ by $F_0$. 

Let $(T(t))_{t\geq 0}$ be a $C_0$-semigroup such that $\left\|T(t)\right\| \leq M e^{\omega t}$ for some $M>0$ and $\omega>\omega_0$. 
Then there exists a constant $m>0$ such that 
\begin{align}
	\left\| \int_0^t T_{-1}(t-r) f(r) \,\mathrm{dr} \right\|_X \leq m \int_0^t e^{\omega (t-r)} \left\| f(r) \right\|_{F_0} \mathrm{dr} \label{Favard}
\end{align}
for all $f\in L^1_{loc}(\mathbb{R}_+,F_0)$ and $t>0$, see \cite[Prop. 3.3]{NagSin93}.

In Proposition \ref{ExamplesBOUNDED} and Example \ref{ExTranslation} we give some elementary examples of infinite-time Staffans-Weiss perturbations. 
\begin{prop}
\label{ExamplesBOUNDED}
Let $(T(t))_{t\geq 0}$ be a uniformly exponentially stable $C_0$-semigroup with generator $(A,D(A))$, i.e., $\left\| T(t) \right\| \leq M e^{- \omega t}$ for all $t>0$ and some $\omega > 0$. 
If $B\in \mathcal{L}(X,F_0)$ satisfies $m \left\| Bx \right\|_{F_0} < \omega \left\| x \right\|$ for all $x\in X$, then $(B,\mathrm{Id})$ is an infinite-time Staffans-Weiss perturbation of $A$, where the constant $m$ is as in \eqref{Favard}. 
\end{prop}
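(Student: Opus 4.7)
The plan is to take $p=1$, $U=Z=X$ with $C=\mathrm{Id}$, and verify directly the five conditions of Definition \ref{class} together with the two infinite-time requirements of Definition \ref{SWperturbation}. The key tool is the Favard-class estimate \eqref{Favard}, which for our growth bound $-\omega$ reads
\[
\Bigl\|\int_0^t T_{-1}(t-r)f(r)\,\mathrm{d}r\Bigr\|_X \leq m\int_0^t e^{-\omega(t-r)}\|f(r)\|_{F_0}\,\mathrm{d}r.
\]
I will interpret the standing hypothesis as producing a constant $q<1$ with $m\|B\|_{X\to F_0}\leq q\omega$; after this, the calculations below are a straightforward unrolling of \eqref{Favard}.

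Applying the estimate to $f=Bu$ together with $e^{-\omega(t-r)}\leq 1$ immediately yields infinite-time $1$-admissibility of $B$ with $M_B=m\|B\|$. For $C=\mathrm{Id}$, exponential stability gives $\int_0^t\|T(s)x\|\,\mathrm{d}s\leq(M/\omega)\|x\|$ uniformly in $t>0$, so $\mathrm{Id}$ is $1$-admissible with $\sup_{t>0}\|\mathcal{C}_t\|\leq M/\omega$. For the pair $(B,\mathrm{Id})$, combining \eqref{Favard} with Fubini produces
\[
\int_0^t\Bigl\|\int_0^r T_{-1}(r-s)Bu(s)\,\mathrm{d}s\Bigr\|_X\,\mathrm{d}r \leq m\|B\|\int_0^t\|u(s)\|\int_s^t e^{-\omega(r-s)}\,\mathrm{d}r\,\mathrm{d}s \leq q\|u\|_1,
\]
so $\|\mathcal{F}_t\|_{\mathcal{L}(L^1(0,t;X))}\leq q<1$ uniformly in $t>0$.

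A Neumann series then gives $I-\mathcal{F}_t$ invertible with $\|(I-\mathcal{F}_t)^{-1}\|\leq(1-q)^{-1}$ for every $t>0$, which settles item (v) of Definition \ref{class} and, combined with the uniform bound on $\mathcal{C}_t$, produces the inequality $\sup_{t>0}\|(I-\mathcal{F}_t)^{-1}\mathcal{C}_t\|\leq M/\bigl((1-q)\omega\bigr)<\infty$ demanded by Definition \ref{SWperturbation} \eqref{invertibility}.

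Only compatibility remains. For any $\lambda>0$, substituting $r=t-s$ and applying \eqref{Favard} to the test function $f(r)=e^{-\lambda(t-r)}Bx$ shows that $v_t:=\int_0^t e^{-\lambda s}T_{-1}(s)Bx\,\mathrm{d}s$ lies in $X$ with $\|v_t\|_X\leq m\|Bx\|_{F_0}/(\omega+\lambda)$; restricting the same test function to a sub-interval $[0,t_2-t_1]$ and re-applying \eqref{Favard} shows $(v_t)$ is Cauchy in $X$, so its limit $R(\lambda,A_{-1})Bx$, known to equal the limit in $X_{-1}$, actually lies in $X$. The only mildly delicate steps are this Cauchy verification for compatibility and reading the strictness of the hypothesis at the operator-norm level; everything else is bookkeeping with \eqref{Favard} and Fubini.
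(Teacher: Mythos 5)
Your proposal is correct, and it reaches the conclusion by a somewhat more self-contained and streamlined route than the paper. The paper delegates all five conditions of Definition \ref{class} (and the infinite-time admissibility of $B$) to \cite[Thm. 4.1]{ABombEngel14} together with Remark \ref{expStability}, and then verifies only condition \eqref{invertibility} of Definition \ref{SWperturbation}, doing so by an induction showing $\int_0^\infty \|\mathcal{F}_t^n[T(\cdot)x](t)\|\,\mathrm{d}t \leq (m\|B\|/\omega)^n (M/\omega)\|x\|$ and summing the resulting series. You instead verify everything directly: your Fubini computation is exactly the paper's induction step, but by running it once at the level of operator norms you obtain the uniform contraction bound $\|\mathcal{F}_t\|_{\mathcal{L}(L^1(0,t;X))} \leq m\|B\|/\omega < 1$, from which the invertibility of $I-\mathcal{F}_t$, the uniform bound on $(I-\mathcal{F}_t)^{-1}$, and condition \eqref{invertibility} all follow in one stroke; this is cleaner than the paper's orbit-by-orbit series estimate and also supplies an explicit compatibility argument (the Cauchy argument for $v_t$ is sound, since $v_{t_2}-v_{t_1} = e^{-\lambda t_1}T(t_1)v_{t_2-t_1}$ with $v_{t_2-t_1}\in X$ of uniformly bounded norm). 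The one point to note is your promotion of the pointwise hypothesis $m\|Bx\|_{F_0} < \omega\|x\|$ to the operator-norm inequality $m\|B\| < \omega$: strictly speaking the hypothesis only yields $m\|B\|\leq\omega$, but the paper's own proof makes exactly the same reading (its geometric series has ratio $m\|B\|/\omega$ and its final bound is $M/(\omega-m\|B\|)$), so this is a shared, and evidently intended, interpretation of the statement rather than a gap in your argument.
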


\begin{proof}
In \cite[Thm. 4.1]{ABombEngel14} the authors show that $(B,\mathrm{Id})$ is a Staffans-Weiss perturbation of $A$ and the operator $B$ is infinite-time $1$-admissible since the semigroup $(T(t))_{t\geq 0}$ is uniformly exponentially stable, see Remark \ref{expStability}. 
It remains to show that
\begin{align*}
	\sup_{t>0} \left\| (I-\mathcal{F}_t)^{-1} {[}T(\cdot)x{]} \right\|_{L^1(0,t;X)} < \infty \qquad \forall x\in X.
\end{align*}

For all $x\in X$ and $n\in \mathbb{N}_0$ we have
\begin{align}
	\int_0^{\infty} \left\| \mathcal{F}_t^n {[}T(\cdot) x{]}(t) \right\| \mathrm{dt} \leq \left(\frac{m \left\| B \right\|}{\omega}\right)^n \frac{M}{\omega} \left\| x \right\|. \label{ManiarInfTime}
\end{align}
In fact, for $n=0$ we obtain 
\begin{align*}
	\int_0^{\infty} \left\| T(t) x \right\| \mathrm{dt} \leq \frac{M}{\omega} \left\| x \right\|.
\end{align*}
Assume that \eqref{ManiarInfTime} holds for some $n\in \mathbb{N}$. By \eqref{Favard} we have
\begin{align*}
	\int_0^{\infty} \left\| \mathcal{F}_t^{n+1} {[}T(\cdot) x{]} (t) \right\| \mathrm{dt}
		&= \int_0^{\infty} \left\| \int_0^t T_{-1}(t-r) B \mathcal{F}_r^n {[}T(\cdot) x{]}(r) \,\mathrm{dr} \right\| \mathrm{dt} \\
		&\leq m \left\| B \right\| \int_0^{\infty} \int_0^t e^{-\omega(t-r)} \left\| \mathcal{F}_r^n {[}T(\cdot) x{]}(r) \right\| \mathrm{dr} \,\mathrm{dt} \\
		&= m \left\| B \right\| \int_0^{\infty} e^{\omega r} \left\| \mathcal{F}_r^n {[}T(\cdot) x{]}(r) \right\| \int_r^{\infty} e^{-\omega t} \,\mathrm{dt} \,\mathrm{dr} \\
		&= \frac{m \left\| B \right\|}{\omega} \int_0^{\infty} \left\| \mathcal{F}_r^n {[}T(\cdot) x{]}(r) \right\| \mathrm{dr} \\
		&\leq \left(\frac{m \left\| B \right\|}{\omega}\right)^{n+1} \frac{M}{\omega} \left\| x \right\|. 
\end{align*}
Hence, for $x\in X$ we obtain
\begin{align*}
	\left\| \sum_{n\in \mathbb{N}_0} \mathcal{F}_t^n {[}T(\cdot) x{]} \right\|_1 
		\leq \sum_{n\in \mathbb{N}_0} \left\| \mathcal{F}_t^n {[}T(\cdot) x{]} \right\|_1
		\leq \frac{M}{\omega - m \left\| B \right\|} \left\| x \right\|
\end{align*}
and the assertion \eqref{ManiarInfTime} follows. 
\end{proof}

\begin{remark}
Following the proof in \cite[Sect. 3]{Maniar2005}\footnote{The author studies robustness for \textit{bounded} $C_0$-semigroups.} we obtain
\begin{align*}
	\sup_{t>0} \left\| B (I-\mathcal{F}_t)^{-1} \mathcal{C}_t \right\|_{\mathcal{L}(X,L^1(0,t;F_0))} < \infty
\end{align*}
with similar arguments as above.
Hence, under the assumptions in \cite{Maniar2005} the pair $(B,\mathrm{Id})$ need not be an infinite-time Staffans-Weiss perturbation of $(A,D(A))$ in general. 
\end{remark}

Next, we discuss boundary perturbations of translation semigroups, see \cite{Greiner87} and \cite[Sect. 4.3]{ABombEngel14}. 
\begin{example}
\label{ExTranslation}
Let $(T(t))_{t\geq 0}$ be the left translation $C_0$-semigroup on $X=L^1(\mathbb{R}_-)$ generated by
\begin{align*}
	A f = f', \quad f\in D(A) := \{f\in W^{1,1}(\mathbb{R}_-): f(0)=0\}.
\end{align*}
We denote by $A_{-1}$ the generator of the extrapolated $C_0$-semigroup $(T_{-1}(t))_{t\geq 0}$ on $X_{-1}^A$. 
For $\lambda \in \rho(A) = \{\lambda \in \mathbb{C}: \mathrm{Re}\lambda > 0\}$, we define the Dirichlet operator corresponding to $\lambda$ and $\left(\frac{d}{ds},W^{1,1}(\mathbb{R}_-)\right)$,
\begin{align*}
	D_{\lambda} = \left( {\delta_0}|_{\mathrm{ker}\left(\lambda-\frac{d}{ds}\right)} \right)^{-1}: &\quad \mathbb{C} \rightarrow L^1(\mathbb{R}_-) \\
		&\quad c \mapsto c e^{\lambda \cdot}.
\end{align*}
Let $C\in C_0(\mathbb{R}_-)'$ satisfy $\left\| C \right\| < 1$. 
Then $((\lambda - A_{-1})D_{\lambda},C)$ is an infinite-time Staffans-Weiss perturbation of $A$. 
\end{example}
\begin{proof}
We can represent the operator $C\in C_0(\mathbb{R}_-)'$ as a Riemann-Stieltjes integral
\begin{align*}
	C f = \int_{\mathbb{R}_-} f(s) \,\mathrm{d}\mu(s), \quad f\in C_0(\mathbb{R}_-),
\end{align*}
where $\mu$ is a regular complex Borel measure on $\mathbb{R}_-$ that satisfies $\left|\mu\right|(\mathbb{R}_-) = \left\| C \right\| < 1$. 
Here, $\left|\mu\right|$ is the variation of $\mu$. 

We first verify that the triple $(A,(\lambda-A_{-1}) D_{\lambda}, C)$ is compatible. 
For $c\in \mathbb{C}$ we have 
\begin{align*}
	R(\lambda,A_{-1}) (\lambda-A_{-1}) D_{\lambda} c = c e^{\lambda \cdot} \in C_0(\mathbb{R}_-). 
\end{align*}
Next, for $u\in W^{1,1}_0(0,t_0)$, $t_0>0$, we have
\begin{align*}
	\int_0^{t_0} T_{-1}(t_0-r) (\lambda - A_{-1})D_{\lambda} u(r) \,\mathrm{dr}
		&= e^{\lambda t_0} \int_0^{t_0} e^{-\lambda (t_0 - r)}  T_{-1}(t_0-r) (\lambda - A_{-1})D_{\lambda} e^{-\lambda r} u(r) \,\mathrm{dr} \\
		&= e^{\lambda t_0} \left( D_{\lambda} e^{-\lambda t_0} u(t_0) 
			- \int_0^{t_0} e^{-\lambda (t_0 - r)}  T(t_0-r) D_{\lambda} {[}e^{-\lambda r} u(r){]}' \,\mathrm{dr} \right) \\
		&= D_{\lambda} u(t_0) - \int_0^{t_0} T(t_0-r) D_{\lambda} {[}u'(r) - \lambda u(r){]} \,\mathrm{dr} \\
		&= e^{\lambda \cdot} u(t_0) + \int_{\max\{0,\cdot+t_0\}}^{t_0} \lambda u(r) e^{\lambda (\cdot + t_0 - r)} \,\mathrm{dr} \\
			&\qquad - \int_{\max\{0,\cdot+t_0\}}^{t_0} u'(r) e^{\lambda (\cdot + t_0 - r)} \mathrm{dr} \\
		&= e^{\lambda \min\{0,\cdot+t_0\}} u(\max\{0,\cdot+t_0\}) \in L^1(\mathbb{R}_-).
\end{align*}
Hence, $\left\| \mathcal{B}_{t_0} u \right\|_{L^1(\mathbb{R}_-)} \leq \left\| u \right\|_1$ for all $t_0>0$. 

The $1$-admissibility of $C$ 
follows as in \cite[Cor. 4.10]{ABombEngel14}, i.e., for $f\in D(A)$ we have
\begin{align*}
	\int_0^{\infty} \left| C T(t) f \right| \mathrm{dt}
		&= \int_0^{\infty} \left| \int_{-\infty}^{-t} f(t+s) \,\mathrm{d}\mu(s) \right| \mathrm{dt} \\
		&\leq \int_{-\infty}^0 \int_0^{-s} \left| f(t+s)\right| \mathrm{dt}\, \mathrm{d}\left|\mu\right|(s) \\
		&\leq \left| \mu \right|(\mathbb{R}_-) \left\| f \right\|_1.
\end{align*} 
Finally, using the above computations for $u\in W^{1,1}_0(\mathbb{R}_+)$, we obtain
\begin{align*}
	\left\| \mathcal{F}_{\infty} u \right\|_{L^1(\mathbb{R}_+)}
		&= \int_0^{\infty} \left| \int_{\mathbb{R}_-} e^{\lambda \min\{0,s+t\}} u(\max\{0,s+t\}) \,\mathrm{d}\mu(s) \right| \mathrm{dt} \\
		&= \int_0^{\infty} \left| \int_{-t}^0 u(s+t) \,\mathrm{d}\mu(s) \right| \mathrm{dt} 
		\leq \int_{-\infty}^0 \int_{-s}^{\infty} \left| u(s+t) \right| \mathrm{dt} \;\mathrm{d}\left|\mu\right|(s) \\
		&= \left| \mu \right|(\mathbb{R}_-) \left\| u \right\|_1.
\end{align*}
Thus, $1\in \rho(\mathcal{F}_{\infty})$ and the pair $((\lambda - A_{-1})D_{\lambda},C)$ satisfies all conditions in Definition \ref{SWperturbation}. 
\end{proof}

\section{Neutral Semigroup}
\label{Applications}
As an application of the results in Section 3 we investigate the asymptotic behavior of the neutral semigroup $(\mathfrak{T}(t))_{t\geq 0}$ (see \eqref{neutralgenerator} for its generator) associated to the following neutral equation. 
We suppose 
\begin{itemize}
\item $(A,D(A))$ to be the generator of a bounded $C_0$-semigroup $(T(t))_{t\geq 0}$ on a Banach space $X$ with uniform bound $M$, 
\item 
$\mathcal{P}\in \mathcal{L}(C({[}-1,0{]},X),X)$, 
\item $F = \delta_0 - \mathcal{K}$ for some $\mathcal{K}\in \mathcal{L}(C({[}-1,0{]},X),X)$, 
\end{itemize}
and investigate the equation
\begin{align*}
	(NE)\qquad \frac{d}{dt} F x_t = A F x_t + \mathcal{P} x_t, \qquad t\geq 0,
\end{align*}
with initial data $x(0) = y$ and $x_0(\cdot) = f(\cdot): {[}-1,0{]} \rightarrow X$ where 
we denote by $x_t: {[}-1,0{]} \rightarrow X$ the history segments given by $x_t(s) := x(t+s)$. 
For further information, see \cite{BurnsHerdmanStech83,HaddRhandi08,NagelHuy03} and references therein. 

We introduce $z(t) := F x_t: {[}-1,0{]} \rightarrow X$ in order to rewrite $(NE)$. We obtain
\begin{align*}
	\begin{cases}
		\;\frac{d}{dt} z(t) = A z(t) + \mathcal{P} x_t, &t\geq 0, \\
		\;\frac{d}{dt} x_t = \frac{d}{ds} x_t, &t\geq 0, \\
		\;z(t) = F x_t = x(t) - \mathcal{K} x_t, \quad &t\geq 0,
	\end{cases}
\end{align*}
with respective initial conditions, where we used \cite[Lemma 3.4]{BatkaiPiazzera05}. 

In \cite[Prop. 21]{HaddRhandi08} Hadd and Rhandi treat such equations and show that the system is \textit{well-posed in a weak sense} (i.e., $(NE)$ has unique \textit{generalized solutions} for any initial value $(x,f)\in \mathfrak{X}:= X\times L^1(-1,0;X)$, see \cite[Def. 17]{HaddRhandi08}), if 
\begin{align}
	\mathfrak{A} &:=	\begin{pmatrix}
						A & \mathcal{P} \\
						0 & \frac{d}{ds}
					\end{pmatrix} \label{neutralgenerator} \\
	\text{with }\;\; D(\mathfrak{A}) &:= \{ (x,f) \in D(A) \times W^{1,1}(-1,0;X) : x = f(0) - \mathcal{K} f \} \nonumber
\end{align}
is the generator of the \textit{neutral semigroup} $(\mathfrak{T}(t))_{t\geq 0}$ on $\mathfrak{X}$, where $\left( \frac{d}{ds}, W^{1,1}(-1,0;X)\right)$ denotes the first derivative. 
More generally, we investigate the generator property of $(\mathfrak{A},D(\mathfrak{A}))$ with 
\begin{align*}
	D(\mathfrak{A}) = \{ (x,f) \in D(A) \times W^{1,1}(-1,0;X) : C x = f(0) - \mathcal{K} f \}
\end{align*}
for $C \in \mathcal{L}(X)$ and then the asymptotic properties of the generated \textit{neutral semigroup} $(\mathfrak{T}(t))_{t\geq 0}$. 
We shall return to the neutral equation (with $C=\alpha \cdot \mathrm{Id}$) in Remark \ref{C=IdNeutral}. 

Our starting point is the operator
\begin{align*}
	\mathfrak{A}_0 :=	\begin{pmatrix}
								A & 0 \\
								0 & D
							\end{pmatrix}, \qquad D(\mathfrak{A}_0) := D(A) \times \{ f\in W^{1,1}(-1,0;X): f(0)=0 \},
\end{align*}
where $D := \frac{d}{ds}$. The operator $(\mathfrak{A}_0,D(\mathfrak{A}_0))$ generates the bounded $C_0$-semigroup 
\begin{align*}
	(\mathfrak{T}_0(t))_{t\geq 0} = \begin{pmatrix}
										T(t) & 0 \\ 0 & S(t)
									\end{pmatrix}_{t\geq 0}
\end{align*}
with $(S(t))_{t\geq 0}$ the nilpotent left translation semigroup on $L^1(-1,0;X)$. 
Its asymptotic properties depend essentially on the semigroup $(T(t))_{t\geq 0}$ since $(S(t))_{t\geq 0}$ is nilpotent. \\

Let the operator $L_0:X\rightarrow L^1(-1,0;X)$ be given by $L_0 x := \mathds{1} \cdot x$. 
We obtain the operator $\mathfrak{A}$ through a perturbation from $\mathfrak{A}_0$ by a pair $(\mathfrak{B},\mathfrak{C})$ as follows. 
\begin{prop}
\label{DefinitionBC}
Let $\mathfrak{A}_{0,-1}$ be the generator of the extrapolated semigroup $(\mathfrak{T}_{0,-1}(t))_{t\geq 0}$ on the extrapolation space $\mathfrak{X}_{-1}^{\mathfrak{A}_0}$. Define the operators
\begin{align*}
	\mathfrak{B} &:=	\begin{pmatrix}
							I & 0 \\
							0 & - D_{-1} L_0
						\end{pmatrix} : X \times X \rightarrow 
												\mathcal{X}_{-1}^{\mathfrak{A}_0}, \\
	\mathfrak{C} &:= 	\begin{pmatrix}
							0 & \mathcal{P} \\
							C & \mathcal{K}
						\end{pmatrix} : D(\mathfrak{C}) \subset \mathcal{X} \rightarrow X \times X,
\end{align*}
where $D(\mathfrak{C}) := X \times W^{1,1}(-1,0;X)$. 
Then $\mathfrak{A} = \left( \mathfrak{A}_{0,-1} + \mathfrak{B} \mathfrak{C} \right)|_{\mathfrak{X}}$.
\end{prop}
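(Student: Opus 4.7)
The plan is to do a direct block-matrix computation in the extrapolation space $\mathfrak{X}_{-1}^{\mathfrak{A}_0} = X_{-1}^A \times L^1(-1,0;X)_{-1}^D$, on which $\mathfrak{A}_{0,-1}$ acts diagonally as $\mathrm{diag}(A_{-1}, D_{-1})$. Applied to $(x,f)\in D(\mathfrak{C}) = X \times W^{1,1}(-1,0;X)$, multiplying out gives
$$\mathfrak{B}\mathfrak{C}\begin{pmatrix} x \\ f \end{pmatrix} = \begin{pmatrix} \mathcal{P}f \\ -D_{-1} L_0 (Cx + \mathcal{K}f) \end{pmatrix},$$
and by linearity of $D_{-1}$ the sum becomes
$$(\mathfrak{A}_{0,-1} + \mathfrak{B}\mathfrak{C})\begin{pmatrix} x \\ f \end{pmatrix} = \begin{pmatrix} A_{-1}x + \mathcal{P}f \\ D_{-1}\bigl(f - L_0(Cx + \mathcal{K}f)\bigr) \end{pmatrix}.$$

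I would then determine for which pairs this expression lies in $\mathfrak{X}$. The first component forces $A_{-1}x\in X$ (since $\mathcal{P}f\in X$ is automatic), equivalently $x\in D(A)$. For the second, the key input is the standard identification of $D$ as the part of $D_{-1}$ in $L^1(-1,0;X)$, so that $D_{-1}g \in L^1(-1,0;X)$ if and only if $g\in D(D)$, i.e.\ iff $g\in W^{1,1}(-1,0;X)$ with $g(0)=0$. Applied to $g := f - L_0(Cx+\mathcal{K}f)$, and using that $L_0(Cx+\mathcal{K}f)$ is the constant function with value $Cx+\mathcal{K}f$ (hence in $W^{1,1}$ with zero derivative), this reduces precisely to $f\in W^{1,1}(-1,0;X)$ together with the boundary condition $Cx = f(0) - \mathcal{K}f$. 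This is exactly $D(\mathfrak{A})$. On this domain $A_{-1}x=Ax$ and $D_{-1}(f - L_0(\cdots)) = f'$, so the sum equals $\mathfrak{A}(x,f)$.

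The only slightly delicate step is the very first one: interpreting $D_{-1}L_0$ as a bounded map from $X$ into $L^1(-1,0;X)_{-1}^D$ so that $\mathfrak{B}$ takes values in $\mathfrak{X}_{-1}^{\mathfrak{A}_0}$, and then justifying the rearrangement $D_{-1}f - D_{-1}L_0(\cdots) = D_{-1}\bigl[f - L_0(\cdots)\bigr]$. This is resolved by observing that $L_0$ is bounded from $X$ into $L^1(-1,0;X)$, which embeds continuously into its extrapolation space, and that $D_{-1}$ is bounded there; everything else is routine linear algebra on $2\times 2$ blocks.
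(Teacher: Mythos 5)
Your proof is correct and follows essentially the same route as the paper's: both compute $(\mathfrak{A}_{0,-1}+\mathfrak{B}\mathfrak{C})(x,f)$ blockwise, identify the domain via $x\in D(A)$ together with the requirement $f-L_0(Cx+\mathcal{K}f)\in D(D)$ (equivalently the boundary condition $Cx=f(0)-\mathcal{K}f$), and then use that the range of $L_0$ lies in $\ker\frac{d}{ds}$ to recover the action of $\mathfrak{A}$. There are no substantive differences between the two arguments.
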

\begin{proof}
The domains $D(\mathfrak{A})$ and $D\left((\mathfrak{A}_{0,-1} + \mathfrak{B} \mathfrak{C})|_{\mathfrak{X}}\right)$ coincide since
\begin{align*}
	D\left( \left( \mathfrak{A}_{0,-1} + \mathfrak{B} \mathfrak{C} \right)|_{\mathfrak{X}} \right) 
		&:= \left\{ (x,f)\in D(\mathfrak{C}) : A_{-1} x + \mathcal{P} f \in X,\; D_{-1} (f - L_0 C x - L_0 \mathcal{K} f)\in L^1(-1,0;X) \right\} \\
		&= \left\{ (x,f)\in D(A)\times W^{1,1}(-1,0;X) : {[}f - L_0 C x - L_0 \mathcal{K} f{]}(0) = 0 \right\} \\
		&= \left\{ (x,f)\in D(A)\times W^{1,1}(-1,0;X) : f(0) - C x - \mathcal{K} f = 0 \right\}. 
\end{align*}
Take $(x,f)\in D(\mathfrak{A})$, then
\begin{align*}
	(\mathfrak{A}_{0,-1} + \mathfrak{B} \mathfrak{C}) \begin{pmatrix} x\\f \end{pmatrix}
		&= \begin{pmatrix} A_{-1} x + \mathcal{P} f \\ D (f - L_0 C x - L_0 \mathcal{K} f) \end{pmatrix} \\
		&= \begin{pmatrix} A_{-1} x + \mathcal{P} f \\ \frac{d}{ds} f - \frac{d}{ds} L_0 (C x - \mathcal{K} f) \end{pmatrix}
		= \begin{pmatrix} A_{-1} x + \mathcal{P} f \\ \frac{d}{ds} f \end{pmatrix} = \mathfrak{A} \begin{pmatrix} x\\f \end{pmatrix}
\end{align*}
since the range of the operator $L_0$ is contained in $\mathrm{ker}\,\frac{d}{ds}$. 
\end{proof}

The operator $(\mathfrak{A}, D(\mathfrak{A}))$ is indeed a generator under suitable assumptions on $\mathcal{K}$ and $\mathcal{P}$. 
\begin{assumption}
\label{noMassAssumption}
Let $\mu$ and $\nu:{[}-1,0{]}\rightarrow \mathcal{L}(X)$ be of bounded variation. 
We assume that the operators $\mathcal{K}$, $\mathcal{P}\in \mathcal{L}(C({[}-1,0{]},X),X)$ are given by the Riemann-Stieltjes integrals
\begin{align*}
	\mathcal{P} f &= \int_{-1}^0 f(r) \;\mathrm{d}\mu(r), \\
	\mathcal{K} g &= \int_{-1}^0 g(r) \;\mathrm{d}\nu(r), \quad f,\; g\in C({[}-1,0{]},X),
\end{align*}
and have \textit{no mass in $0$}, i.e., for every $\epsilon>0$ there exists $\delta > 0$ such that 
\begin{align*}
	\left\| \mathcal{P} f \right\|_X,\; \left\| \mathcal{K} f \right\|_X \leq \epsilon \left\| f \right\|_{\infty}
\end{align*}
for every $f\in C({[}-1,0{]},X)$ satisfying $\mathrm{supp}\,f \subset {[}-\delta,0{]}$. 
\end{assumption}

\begin{remark}
We define the variation $\left| \mu \right|$ of the measure $\mu:{[}-1,0{]} \rightarrow \mathcal{L}(X)$ to be
\begin{align*}
	\left| \mu \right| (A) := \sup_{\mathcal{Z}} \sum_{E\in \mathcal{Z}} \left\| \mu(E) \right\|, \quad A\subset {[}-1,0{]} \;\text{measurable},
\end{align*}
where the supremum is taken over all partitions $\mathcal{Z}$ into finitely many disjoint, measurable subsets of $A$. 
Let $\mathcal{P}\in \mathcal{L}(C({[}-1,0{]},X),X)$ satisfy Assumption \ref{noMassAssumption}. 
Then $\left| \mu \right| {[}-t,0{]} \stackrel{t\searrow 0}{\longrightarrow} 0$.
\end{remark}

\begin{thm}
\label{neutralGeneration}
Let $(\mathfrak{A}_0,D(\mathfrak{A}_0))$ be as above and assume that $\mathcal{P}$, $\mathcal{K}\in \mathcal{L}(C(-1,0;X),X)$ satisfy Assumption \ref{noMassAssumption}. 
Then $(\mathfrak{A},D(\mathfrak{A}))$ is the generator of a $C_0$-semigroup on $\mathfrak{X}$ for all $C\in \mathcal{L}(X)$. 
\end{thm}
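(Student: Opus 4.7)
The strategy is to invoke the Staffans--Weiss perturbation theorem (Theorem \ref{ALTperturbation}) for the base generator $\mathfrak{A}_0$ with the perturbation pair $(\mathfrak{B},\mathfrak{C})$ from Proposition \ref{DefinitionBC}. I would pick the intermediate space $Z := X \times W^{1,1}(-1,0;X)$, which satisfies $D(\mathfrak{A}_0) \hookrightarrow Z \hookrightarrow \mathfrak{X}$ and on which $\mathfrak{C}$ is bounded because of $W^{1,1}(-1,0;X) \hookrightarrow C([-1,0],X)$ together with $\mathcal{P},\mathcal{K} \in \mathcal{L}(C([-1,0],X),X)$; the exponent $p=1$ matches the $L^1$-structure of the history space. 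The task then splits into verifying the five conditions of Definition \ref{class}.

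First, thanks to the block-diagonal form of $\mathfrak{B}$, compatibility and $1$-admissibility of $\mathfrak{B}$ decouple: the first block is the identity perturbation and is trivial, while the second block $-D_{-1}L_0$ is precisely a boundary perturbation of the nilpotent translation semigroup, formally identical to Example \ref{ExTranslation} with $\lambda = 0$. A short computation gives $R(\lambda,D_{-1})(-D_{-1}L_0)u_2 = e^{\lambda\cdot}u_2 \in W^{1,1}(-1,0;X)$ (compatibility), and for $u_2\in W^{1,1}_0$ the integration-by-parts argument of Example \ref{ExTranslation} yields the explicit formula
$$\Bigl[\int_0^t S_{-1}(t-s)(-D_{-1}L_0)u_2(s)\,\mathrm{ds}\Bigr](\theta) = u_2(\theta+t)\,\mathds{1}_{[-\min(t,1),\,0]}(\theta),$$
with norm $\leq \|u_2\|_{L^1(0,t;X)}$; density gives $1$-admissibility of $\mathfrak{B}$. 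For $\mathfrak{C}$, writing $\mathfrak{C}\mathfrak{T}_0(s)(x,f) = (\mathcal{P}S(s)f,\,CT(s)x + \mathcal{K}S(s)f)$, the admissibility estimate reduces to Fubini bounds of the form $\int_0^t \|\mathcal{P}S(s)f\|\,\mathrm{ds} \leq |\mu|([-1,0])\,\|f\|_{L^1}$ together with the trivial bound on $\int_0^t \|CT(s)x\|\,\mathrm{ds}$ via boundedness of $T$.

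The main work — and the only point where the no-mass-at-$0$ hypothesis is genuinely needed — is the invertibility of $I - \mathcal{F}_t$ for some $t>0$. Writing $V_r := \int_0^r \mathfrak{T}_{0,-1}(r-s)\mathfrak{B}u(s)\,\mathrm{ds}$ in block form and composing with $\mathfrak{C}$, a Fubini computation in the spirit of the admissibility argument yields
$$\|\mathcal{F}_t u\|_{L^1(0,t;X\times X)} \leq \bigl(|\mu|([-t,0]) + |\nu|([-t,0]) + tM\|C\|\bigr)\,\|u\|_{L^1(0,t;X\times X)},$$
which simultaneously confirms $1$-admissibility of the pair $(\mathfrak{B},\mathfrak{C})$. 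By Assumption \ref{noMassAssumption} and the remark following it, $|\mu|([-t,0])$ and $|\nu|([-t,0])$ tend to $0$ as $t\searrow 0$, so the prefactor is $<1$ for all sufficiently small $t$, and a Neumann series gives $I-\mathcal{F}_t$ invertible. All five conditions of Definition \ref{class} are thus met, and Theorem \ref{ALTperturbation} produces $\mathfrak{A} = (\mathfrak{A}_{0,-1}+\mathfrak{B}\mathfrak{C})|_{\mathfrak{X}}$ as a $C_0$-semigroup generator on $\mathfrak{X}$. The main technical obstacle is the careful block-by-block bookkeeping behind the $\mathcal{F}_t$ estimate; everything else is standard Staffans--Weiss machinery once $Z$ and the explicit control map are in hand.
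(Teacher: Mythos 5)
Your proposal is correct and follows essentially the same route as the paper: verify the five conditions of Definition \ref{class} for the pair $(\mathfrak{B},\mathfrak{C})$ with $Z=X\times W^{1,1}(-1,0;X)$ and $p=1$, reduce the control-map computation for the translation block to the Example \ref{ExTranslation} formula, and obtain invertibility of $I-\mathcal{F}_{t}$ for small $t$ from the bound $|\mu|([-t,0])+|\nu|([-t,0])+tM\|C\|<1$ guaranteed by Assumption \ref{noMassAssumption}, before invoking Theorem \ref{ALTperturbation}. The paper's proof is the same argument with the same estimates.
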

\begin{proof}
We show that $(\mathfrak{B},\mathfrak{C})$ as above is a Staffans-Weiss perturbation of $\mathfrak{A}_0$ (see Definition \ref{class}). 
The triple $(\mathfrak{A}_0,\mathfrak{B},\mathfrak{C})$ is compatible since for $x$, $y\in X$ we have
\begin{align*}
	R(\lambda,\mathfrak{A}_{0,-1}) \mathfrak{B} \begin{pmatrix} x\\y \end{pmatrix} = 	\begin{pmatrix}
																							R(\lambda,A) x & 0\\
																							0 & L_0 y - \lambda R(\lambda,D) L_0 y 
																						\end{pmatrix} \in X \times W^{1,1}(-1,0;X). 
\end{align*} 
The following relies on computations performed in \cite[Cor. 4.10]{ABombEngel14}, see Example \ref{ExTranslation} as well. 
For arbitrary $t_0>0$ and $u_1$, $u_2\in W^{1,1}(0,t_0;X)$ we obtain
\begin{align*}
	\left\| \int_0^{t_0} \mathfrak{T}_{0,-1}(t_0-r) \mathfrak{B} \begin{pmatrix} u_1 \\ u_2 \end{pmatrix}(r) \mathrm{dr} \right\|
		&= \left\| \int_0^{t_0} T(t_0-r) u_1(r) \mathrm{dr} \right\| \\
			&\qquad+ \left\| \int_0^{t_0} S_{-1}(t_0-r) (- D_{-1} L_0) u_2(r) \mathrm{dr} \right\| \\
		&\leq M \left\| u_1 \right\|_1 + \left\| u_2 \right\|_1 =: M \left\| \begin{pmatrix} u_1 \\ u_2 \end{pmatrix}\right\|_{L^1(-1,0;X\times X)}.
\end{align*}
Thus, $\mathfrak{B}$ is infinite-time 1-admissible. 
Choose $t_0 > 0$. 
For all $(x,f)\in D(\mathfrak{A}_0)$ we obtain
\begin{align}
	\int_0^{t_0} \left\| \begin{pmatrix} 0&\mathcal{P}\\C&\mathcal{K}\end{pmatrix} \mathfrak{T}_0(s) \begin{pmatrix} x\\f \end{pmatrix} \right\| \mathrm{ds}
		&\leq \int_0^{t_0} \left\| \mathcal{P} S(s) f \right\| \mathrm{ds} \nonumber \\
			&\qquad+ \int_0^{t_0} \left\| \mathcal{K} S(s) f \right\| \mathrm{ds} 
			+ \int_0^{t_0} \left\| C T(s) x \right\| \mathrm{ds} \nonumber \\
		&\leq \int_0^1 \left| \int_{-1}^{-s} f(r+s) \,\mathrm{d}\mu(r) \right| \mathrm{ds} \nonumber \\
			&\qquad+ \int_0^1 \left| \int_{-1}^{-s} f(r+s) \,\mathrm{d}\nu(r) \right| \mathrm{ds}
			+ \int_0^{t_0} \left\| C T(s) f \right\| \mathrm{ds} \label{Cadmissible} \\
		&\leq \int_{-1}^0 \int_0^{-r} \left| f(r+s) \right| \mathrm{ds} \,\mathrm{d}\left|\mu\right|(r) \nonumber \\
			&\qquad+ \int_{-1}^0 \int_0^{-r} \left| f(r+s) \right| \mathrm{ds} \,\mathrm{d}\left|\nu\right|(r)
			+ t_0 \left\| C \right\| M \left\| x \right\| \nonumber \\
		&\leq \left\| f \right\|_1 \left( \int_{-1}^0 \mathrm{d}\left|\mu\right|(r) 
			+ \int_{-1}^0 \mathrm{d}\left|\nu\right|(r) \right)
			+ t_0 \left\| C \right\| M \left\| x \right\| \nonumber \\
		&= (\left\| \mu \right\| + \left\| \nu \right\|) \left\| f \right\|_1
			+ t_0 \left\| C \right\| M \left\| x \right\|, \nonumber 
\end{align}
where $\left\| \mu \right\| := \left| \mu \right| {[}-1,0{]}$ (and $\left\|\nu\right\|$ respectively). 
Hence, the operator $\mathfrak{C}$ is 1-admissible. 
For $0<t_0\leq 1$ and $u_1$, $u_2\in W^{2,1}_0(0,t_0;X)$
\begin{align*}
	\int_0^{t_0} \left\| \begin{pmatrix} 0&\mathcal{P}\\C&\mathcal{K}\end{pmatrix} \int_0^t \mathfrak{T}_{0,-1}(t-r) \mathfrak{B} \begin{pmatrix} u_1(r)\\u_2(r) \end{pmatrix} \mathrm{dr}\right\| &\mathrm{dt} \\
		&\hspace{-3cm}\leq \int_0^{t_0} \left\| \mathcal{P} \int_0^t S_{-1}(t-r) (-D_{-1}L_0) u_2(r) \mathrm{dr}\right\| \mathrm{dt} \\
			&\hspace{-3cm}\qquad + \int_0^{t_0} \left\| \mathcal{K} \int_0^t S_{-1}(t-r) (-D_{-1}L_0) u_2(r) \mathrm{dr}\right\| \mathrm{dt} \\
			&\hspace{-3cm}\qquad + \int_0^{t_0} \left\| C \int_0^t T(t-r) u_1(r) \mathrm{dr}\right\| \mathrm{dt} \\
		&\hspace{-3cm}\leq (\left|\mu\right| {[}-t_0,0{]} + \left|\nu\right| {[}-t_0,0{]})\left\| u_2 \right\|_1 + t_0 \left\| C \right\| M \left\| u_1 \right\|_1,
\end{align*}
see \cite[Cor. 4.10]{ABombEngel14} for analogous computations. 
Hence, the pair $(\mathfrak{B},\mathfrak{C})$ is 1-admissible and $1\in \rho(\mathcal{F}_{t_0})$ for some $t_0$ sufficiently small by Assumption \ref{noMassAssumption}. 
Theorem \ref{ALTperturbation} yields the assertion. 
\end{proof}

In order to obtain the following robustness result we have to make sure that $(\mathfrak{B},\mathfrak{C})$ is an infinite-time Staffans-Weiss perturbation of $\mathfrak{A}_0$. 
\begin{prop}
\label{neutralEqu}
Let $(\mathfrak{A}_0,D(\mathfrak{A}_0))$ be the generator of the bounded $C_0$-semigroup $(\mathfrak{T}_0(t))_{t\geq 0}$. Assume that
\begin{enumerate}[(i)]
\item ${[}t\mapsto T(t) x{]} \in \E(X)$ for some asymptotic subspace $\E(X) \subset \Cub$ and all $x\in X$,
\item $\mathcal{P}$ and $\mathcal{K}\in \mathcal{L}(C({[}-1,0{]},X),X)$ satisfy Assumption \ref{noMassAssumption},
\item $(\mathfrak{B},\mathfrak{C})$ satisfies condition $\eqref{invertibility}$ in Definition \ref{SWperturbation}.
\end{enumerate}
Then ${[}t\mapsto \mathfrak{T}(t) z{]} \in \E(\mathfrak{X})$ for all $z\in \mathfrak{X}$, where $\E(\mathfrak{X}) \subset C_{ub}(\mathbb{R}_+,\mathfrak{X})$ is defined analogously to $\E(X)$. 
\end{prop}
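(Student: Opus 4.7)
The plan is to apply Theorem \ref{ThmAsymptotik} to the bounded $C_0$-semigroup $(\mathfrak{T}_0(t))_{t\geq 0}$ together with the perturbation $(\mathfrak{B},\mathfrak{C})$ constructed in Proposition \ref{DefinitionBC}. What needs to be verified is that (a) every orbit of $\mathfrak{T}_0$ belongs to $\E(\mathfrak{X})$, and (b) the pair $(\mathfrak{B},\mathfrak{C})$ is an \emph{infinite-time} Staffans-Weiss perturbation of $\mathfrak{A}_0$. Once these are in place, Theorem \ref{ThmAsymptotik} directly yields $[t\mapsto \mathfrak{T}(t) z]\in \E(\mathfrak{X})$ for every $z\in \mathfrak{X}$.

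For step (b), Theorem \ref{neutralGeneration} has already shown that $(\mathfrak{B},\mathfrak{C})$ is a Staffans-Weiss perturbation of $\mathfrak{A}_0$ under Assumption \ref{noMassAssumption}. A glance at the estimate
\begin{align*}
\left\|\int_0^{t_0}\mathfrak{T}_{0,-1}(t_0-r)\mathfrak{B}\begin{pmatrix}u_1\\ u_2\end{pmatrix}(r)\,\mathrm{dr}\right\|\leq M\|u_1\|_1 + \|u_2\|_1
\end{align*}
derived in that proof reveals that the constant is time-independent, so $\mathfrak{B}$ is infinite-time $1$-admissible. The remaining condition \eqref{invertibility} of Definition \ref{SWperturbation} is precisely hypothesis (iii) of the present proposition, so nothing else needs to be checked here.

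For step (a), I would write $\mathfrak{T}_0(t) z = (T(t) x, S(t) f)$ for $z = (x,f)\in \mathfrak{X}$. Since $(S(t))_{t\geq 0}$ is the nilpotent left translation semigroup on $L^1(-1,0;X)$ with $S(t)f = 0$ for $t\geq 1$, the orbit splits as the sum of the natural embedding of $[t\mapsto T(t)x]\in \E(X)$ into the first coordinate of $\mathfrak{X}$ plus an $\mathfrak{X}$-valued function of compact support in $\mathbb{R}_+$. Since $\E(\mathfrak{X})$ is defined analogously to $\E(X)$ on the product space $\mathfrak{X}$, and since each asymptotic property from the list in Section 3 is preserved both under coordinate embeddings and under the addition of a compactly supported summand, one concludes $[t\mapsto \mathfrak{T}_0(t)z]\in \E(\mathfrak{X})$.

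I expect step (a) to be the delicate one: it is really a statement about how the analogue of $\E$ behaves with respect to the product structure of $\mathfrak{X}$, and it needs to be verified case-by-case for the list of examples (boundedness, (weak) (uniform) compactness, (weak) strong stability, mean/uniform ergodicity). Once that structural remark is established, step (b) together with the invocation of Theorem \ref{ThmAsymptotik} is bookkeeping.
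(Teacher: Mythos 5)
Your proposal is correct and follows essentially the same route as the paper: the paper's proof consists of citing Theorem \ref{neutralGeneration} for the Staffans--Weiss property and the infinite-time $1$-admissibility of $\mathfrak{B}$ (your step (b)) and then invoking hypothesis (iii) and Theorem \ref{ThmAsymptotik}. Your step (a) --- checking that the unperturbed orbits $[t\mapsto(T(t)x,S(t)f)]$ lie in $\E(\mathfrak{X})$ via the coordinate embedding, the nilpotence of $(S(t))_{t\geq 0}$, and the translation-biinvariance of $\E$ --- is a hypothesis of Theorem \ref{ThmAsymptotik} that the paper only addresses by the earlier informal remark that the asymptotics of $(\mathfrak{T}_0(t))_{t\geq 0}$ ``depend essentially on $(T(t))_{t\geq 0}$ since $(S(t))_{t\geq 0}$ is nilpotent,'' so making it explicit is a sound refinement rather than a different argument.
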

\begin{proof}
By Theorem \ref{neutralGeneration}, the pair $(\mathfrak{B},\mathfrak{C})$ is a Staffans-Weiss perturbation of $\mathfrak{A}_0$ and $\mathfrak{B}$ is infinite-time $1$-admissible. 
Since we assume the condition $\eqref{invertibility}$ in Definition \ref{SWperturbation} to hold, Theorem \ref{ThmAsymptotik} gives the assertion. 
\end{proof}

We present a situation in which the conditions of Proposition \ref{neutralEqu} are satisfied. 
\begin{cor}
\label{Situation}
Let $(\mathfrak{A}_0,D(\mathfrak{A}_0))$ be the generator of the $C_0$-semigroup $(\mathfrak{T}_0(t))_{t\geq 0}$, $(\mathfrak{B},\mathfrak{C})$ as in Proposition \ref{DefinitionBC}. Assume, in addition, that
\begin{enumerate}[(i)]
\item ${[}t\mapsto T(t) x{]} \in \E(X)$ for some asymptotic subspace $\E(X) \subset \Cub$ and all $x\in X$,
\item $\mathcal{P} = p \delta_{-1}$, $\mathcal{K} = k \delta_{-1}$ with $p$, $k\in \mathcal{L}(X)$ satisfying $\left\| p \right\| + \left\| k \right\| < 1$, 
\item $C\in \mathcal{L}(X)$ such that $\int_0^t \left\| C T(s) x \right\| \mathrm{ds} \leq q \left\| x \right\|$ for all $x\in D(A)$, $t>0$ and some $0\leq q < 1$. \label{CinfAdmissible}
\end{enumerate}
Then ${[}t\mapsto \mathfrak{T}(t)z{]}\in \E(\mathfrak{X})$ for all $z\in \mathfrak{X}$.
\end{cor}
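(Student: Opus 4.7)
The plan is to check the three hypotheses of Proposition \ref{neutralEqu}. Hypothesis (i) is assumed outright. Hypothesis (ii) is immediate, since $\mathcal{P}=p\delta_{-1}$ and $\mathcal{K}=k\delta_{-1}$ are supported at $\{-1\}$, which lies at positive distance from $0$; thus Assumption \ref{noMassAssumption} holds vacuously (take any $\delta<1$). So the real task is to verify condition (iii), namely the uniform bound $\sup_{t>0}\|(I-\mathcal{F}_t)^{-1}\mathcal{C}_t\|<\infty$.

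First I would make $\mathcal{C}_\infty$ and $\mathcal{F}_\infty$ explicit. The nilpotent translation $(S(t))_{t\geq 0}$ on $L^1(-1,0;X)$ satisfies $(S(s)f)(\sigma)=f(\sigma+s)$ for $\sigma+s\leq 0$ and $0$ otherwise, and, arguing as in Example \ref{ExTranslation} and the proof of Theorem \ref{neutralGeneration}, the second component of $\int_0^s \mathfrak{T}_{0,-1}(s-r)\mathfrak{B}(u_1(r),u_2(r))\,\mathrm{d}r$ equals $\sigma\mapsto u_2(\sigma+s)\mathds{1}_{[-\min\{s,1\},0]}(\sigma)$. Evaluating the point evaluations $\mathcal{P},\mathcal{K}$ at $\sigma=-1$ and reading off the first component yields
\begin{align*}
\mathcal{C}_\infty(x,f)(s) &= \begin{pmatrix} p\,f(s-1)\mathds{1}_{[0,1]}(s) \\ C T(s)x + k\,f(s-1)\mathds{1}_{[0,1]}(s) \end{pmatrix}, \\
\mathcal{F}_\infty(u_1,u_2)(s) &= \begin{pmatrix} p\,u_2(s-1)\mathds{1}_{s\geq 1} \\ C\int_0^s T(s-r)u_1(r)\,\mathrm{d}r + k\,u_2(s-1)\mathds{1}_{s\geq 1} \end{pmatrix}.
\end{align*}

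Next I would invert $I-\mathcal{F}_t$ on $L^1(0,t;X\times X)$ by a fixed-point argument. Writing $(v_1,v_2)=(I-\mathcal{F}_t)(u_1,u_2)$ and eliminating $u_1=v_1+p\,u_2(\cdot-1)\mathds{1}_{\cdot\geq 1}$ from the first equation reduces the system to $u_2 = g + \mathcal{L}u_2$, where $g\in L^1$ is a known function of $(v_1,v_2)$ and
\begin{align*}
(\mathcal{L}u_2)(s) := C\int_0^s T(s-r)p\,u_2(r-1)\mathds{1}_{r\geq 1}\,\mathrm{d}r + k\,u_2(s-1)\mathds{1}_{s\geq 1}.
\end{align*}
The second summand has $L^1$-norm at most $\|k\|\,\|u_2\|_1$ by translation invariance. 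For the first summand, applying the Miyadera--Voigt computation from Corollary \ref{CorCASPIA} (estimate \eqref{BC-MV}) to the input $r\mapsto p\,u_2(r-1)\mathds{1}_{r\geq 1}$ gives $L^1$-norm at most $q\|p\|\,\|u_2\|_1$. Hence $\|\mathcal{L}\|_{L^1(0,t;X)}\leq q\|p\|+\|k\|$, uniformly in $t$. Since $q<1$ and $\|p\|+\|k\|<1$ imply $q\|p\|+\|k\|<1$, the Neumann series converges and $(I-\mathcal{F}_t)^{-1}$ is bounded with norm $\leq(1-q\|p\|-\|k\|)^{-1}$, independently of $t$.

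Finally, hypothesis (iii) of Corollary \ref{Situation} together with the $L^1$-bound $\|\mathcal{C}_t(x,f)\|_1\leq q\|x\|+(\|p\|+\|k\|)\|f\|_1$ (read off the explicit formula above) yields the required uniform bound on $\|(I-\mathcal{F}_t)^{-1}\mathcal{C}_t\|$. Proposition \ref{neutralEqu} then delivers the conclusion. The principal obstacle is the explicit description of $\mathcal{F}_\infty$: the Desch--Schappacher term $-D_{-1}L_0$ in $\mathfrak{B}$ lives in the extrapolation space, so computing the history component requires the same care as in Example \ref{ExTranslation}; once the formula is in hand, everything reduces to a straightforward $L^1$-contraction argument.
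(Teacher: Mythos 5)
Your proposal is correct and follows essentially the same route as the paper: reduce to Proposition \ref{neutralEqu}, then obtain condition \eqref{invertibility} of Definition \ref{SWperturbation} by combining the uniform $L^1$-bound $\left\|\mathfrak{C}_t(x,f)\right\|_1\leq q\left\|x\right\|+(\left\|p\right\|+\left\|k\right\|)\left\|f\right\|_1$ with a Neumann-series bound on $(I-\mathcal{F}_t)^{-1}$, using exactly the two key estimates (the nilpotent-shift computation giving the factor $\left\|p\right\|+\left\|k\right\|$ and the Miyadera--Voigt estimate \eqref{BC-MV} giving the factor $q$). The only cosmetic difference is that you eliminate $u_1$ and run the contraction on the second component alone, whereas the paper simply observes $\left\|\mathcal{F}_\infty\right\|\leq\max\{q,\left\|p\right\|+\left\|k\right\|\}<1$ on the product space and inverts directly.
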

\begin{proof}
By Proposition \ref{neutralEqu} it remains to show that condition $\eqref{invertibility}$ in Definition \ref{SWperturbation} is satisfied. 
For $u\in W^{2,1}_0(\mathbb{R}_+,X)$ we have
\begin{align*}
	\int_0^{\infty} \left\| \mathcal{P} \int_0^t S_{-1}(t-r) (-D_{-1}L_0) u(r) \mathrm{dr} \right\| \mathrm{dt}
		&\leq \left\| p \right\| \int_0^{\infty} \left\| \delta_{-1} \int_0^t S_{-1}(t-r) (-D_{-1}L_0) u(r) \mathrm{dr} \right\| \mathrm{dt} \\
		&= \left\| p \right\| \int_0^{\infty} \left\| u(t) - \delta_{-1} \int_0^t S(t-r) L_0 u'(r) \mathrm{dr} \right\| \mathrm{dt} \\
		&= \left\| p \right\| \int_0^{\infty} \left\| u(t) - \int_{\max\{0,t-1\}}^t u'(r) \mathrm{dr} \right\| \mathrm{dt} \\
		&= \left\| p \right\| \int_1^{\infty} \left\| u(t-1) \right\| \mathrm{dt} = \left\| p \right\| \left\| u \right\|_1.
\end{align*}
We show that $1\in \rho(\mathcal{F}_{\infty})$. 
For $u_1$, $u_2\in W^{2,1}_0(\mathbb{R}_+,X)$ we obtain
\begin{align}
	\int_0^{\infty} \left\| \mathfrak{C} \int_0^t \mathfrak{T}_{0,-1}(t-r) \mathfrak{B} \begin{pmatrix} u_1(r) \\ u_2(r) \end{pmatrix} \mathrm{dr} \right\| \mathrm{dt} 
		&\leq \int_0^{\infty} \left\| \mathcal{P} \int_0^t S_{-1}(t-r) (-D_{-1}L_0) u_2(r) \mathrm{dr}\right\| \mathrm{dt} \nonumber\\
			&\quad + \int_0^{\infty} \left\| \mathcal{K} \int_0^t S_{-1}(t-r) (-D_{-1}L_0) u_2(r) \mathrm{dr}\right\| \mathrm{dt} \nonumber\\
			&\quad + \int_0^{\infty} \left\| C \int_0^t T(t-r) u_1(r) \mathrm{dr}\right\| \mathrm{dt} \label{thirdSummand}\\
		&\leq (\left\| p \right\| + \left\| k \right\|) \int_1^{\infty} \left\| u_2(t - 1) \right\| \mathrm{dt} 
			+ q \left\| u_1 \right\|_1 \nonumber\\
		&\leq (\left\| p \right\| + \left\| k \right\|) \left\| u_2 \right\|_1 
			+ q \left\| u_1 \right\|_1 \nonumber
\end{align}
where we estimated \eqref{thirdSummand} as in the proof of Corollary \ref{CorCASPIA}. 
Hence, $\left\| \mathcal{F}_{\infty} \right\| < 1$. 

For $t>0$, we denote by $\mathfrak{C}_t$ the observation map corresponding to $\mathfrak{A}_0$ and $\mathfrak{C}$. 
Using assumption \eqref{CinfAdmissible} and repeating the estimate \eqref{Cadmissible} in Theorem \ref{neutralGeneration}, we obtain 
\begin{align*}
	\sup_{t>0} \left\| (1-\mathcal{F}_t)^{-1} \mathfrak{C}_t \begin{pmatrix} x\\f \end{pmatrix} \right\| 
		&\leq (1 - \left\| \mathcal{F}_{\infty} \right\|)^{-1} \sup_{t>0} \int_0^t \left\| \begin{pmatrix} 0&\mathcal{P}\\C&\mathcal{K}\end{pmatrix} \mathfrak{T}_0(s) \begin{pmatrix} x\\f \end{pmatrix} \right\| \mathrm{ds} \\
		&\leq (1 - \left\| \mathcal{F}_{\infty} \right\|)^{-1} \sup_{t>0} \left( \int_0^t \left\| C T(s) x \right\| \mathrm{ds} + (\left\| k \right\| + \left\| p \right\|) \int_0^t \left\| \delta_{-1} S(s) f \right\| \mathrm{ds} \right) \\
		&\leq (1 - \left\| \mathcal{F}_{\infty} \right\|)^{-1} \left( q \left\| x \right\| + (\left\| k \right\| + \left\| p \right\|) \int_0^1 \left\| f(s-1) \right\| \mathrm{ds} \right) \\
		&= (1 - \left\| \mathcal{F}_{\infty} \right\|)^{-1} ( q \left\| x \right\| + (\left\| k \right\| + \left\| p \right\|) \left\| f \right\|_1) \\
		&< (1 - \left\| \mathcal{F}_{\infty} \right\|)^{-1} \left\| \begin{pmatrix} x \\ f \end{pmatrix} \right\|
\end{align*}
for all $(x,f) \in D(\mathcal{A}_0)$.
\end{proof}

\begin{example}
\label{C=IdNeutral}
Let $(A,D(A))$ be the generator of a uniformly exponentially stable $C_0$-semigroup $(T(t))_{t\geq 0}$ with $\left\| T(t) \right\| \leq M e^{- \omega t}$ and $k$, $p\in \mathcal{L}(X)$. 

We consider the neutral equation
\begin{align*}
	\frac{d}{dt} {[} x(t) - k x(t-1){]} &= A {[} x(t) - k x(t-1){]} + \alpha \cdot p x(t-1), \quad t\geq 0, \quad \alpha > 0. 
\end{align*}
with initial data $x_0 = f$ and $x(0) = y$. 
Generalized wellposedness of the neutral equation corresponds to the generator property of the operator matrix 
\begin{align*}
	\mathfrak{A} &= \begin{pmatrix} A & \alpha p \delta_{-1} \\ 0 & \frac{d}{ds} \end{pmatrix} \\
	D(\mathfrak{A}) &= \{ (x,f) \in D(A) \times W^{1,1}(-1,0;X) : x = f(0) - k f(-1) \}
\end{align*}
defined in \eqref{neutralgenerator}, see \cite{HaddRhandi08}. 
Using the isomorphism $\mathfrak{S}_{\alpha} \begin{pmatrix} x \\ f \end{pmatrix} = \begin{pmatrix} x \\ \alpha f \end{pmatrix}$ on $\mathfrak{X}$ we consider the operator matrix
\begin{align*}
	\tilde{\mathfrak{A}} &= \begin{pmatrix} A & p \delta_{-1} \\ 0 & \frac{d}{ds} \end{pmatrix} \\
	D(\tilde{\mathfrak{A}}) &= \{ (x,f) \in D(A) \times W^{1,1}(-1,0;X) : \alpha x = f(0) - k f(-1) \}
\end{align*}
given by $\tilde{\mathfrak{A}} = \mathfrak{S}_{\alpha} \mathfrak{A} \mathfrak{S}_{\alpha}^{-1}$.

Choose operators $k$ and $p$ with $\left\| k \right\| + \left\| p \right\| < 1$ and $\alpha > 0$ such that $M \alpha < \omega$. 
The assumptions in Corollary \ref{Situation} are satisfied (see also Proposition \ref{ExamplesBOUNDED} for $C = \alpha \cdot \mathrm{Id}$). 
Hence, $(\tilde{\mathfrak{A}},D(\tilde{\mathfrak{A}}))$ generates a $C_0$-semigroup $(\tilde{\mathfrak{T}}(t))_{t\geq 0}$ and the orbits satisfy ${[}t\mapsto \tilde{\mathfrak{T}}(t) (x,f){]} \in C_0(\mathbb{R}_+,\mathfrak{X})$ for all $(x,f) \in \mathfrak{X}$ since the semigroup $(T(t))_{t\geq 0}$ is strongly stable. 

Finally, we conclude that $(\mathfrak{A},D(\mathfrak{A}))$ generates a $C_0$-semigroup $(\mathfrak{T}(t))_{t\geq 0}$ having orbits in $C_0(\mathbb{R}_+,\mathfrak{X})$ since 
\begin{align*}
	\mathfrak{T}(t) \begin{pmatrix} x \\f \end{pmatrix} = \mathfrak{S}_{\alpha}^{-1} \tilde{\mathfrak{T}}(t) \begin{pmatrix} x \\ \alpha f \end{pmatrix} \stackrel{t\rightarrow \infty}{\longrightarrow} 0
\end{align*}
for all $(x,f) \in \mathfrak{X}$. 
\end{example}

\end{document}